 \newtheorem{definition}{Definition}[section]
\newtheorem{remark}{Remark}[section]
\newtheorem{lemma}{Lemma}[section]
\newtheorem{corollary}{Corollary}[section]
\newtheorem{proposition}{Proposition}[section]
\newtheorem{theorem}{Theorem}[section]
\newenvironment{proof}{\noindent\textbf{Proof: }}{\hfill \small $\Box$}
\newcommand{\NN}{\mathbb{N}}
\newcommand{\RR}{\mathbb{R}}
\newcommand{\II}{\mathbb{I}}
\newcommand{\lra}{\longrightarrow}
\newcommand{\Ra}{\Rightarrow}
\newcommand{\prt}[1]{\langle #1\rangle}
\newcommand{\brs}[1]{\mid\! #1\!\mid}
\def\bigsqcap{\mathop{\rule[-0.2ex]{.07em}{2.17ex}
               \rule[1.9ex]{0.55em}{.17ex}
                \rule[-0.2ex]{.07em}{2.17ex}}}
\def\Bigsqcap{ \mathop{ \rule[-0.7ex]{.07em}{3ex}
                \rule[2.15ex]{0.8em}{.17ex}
                \rule[-0.7ex]{.07em}{3ex}}}
\newcommand{\Sumat}[2]{\sum\limits_{#1}^{#2}}
\newcommand{\Lim}[1]{\lim\limits_{#1}}
\newcommand{\rojo}[1]{\textcolor[rgb]{1.00,0.00,0.00}{#1}}
\begin{document}


\title{Interval probability density functions constructed from a generalization of the Moore and Yang integral}

\author{Benjamin Bedregal \\ Departamento de Inform\'atica e Matem\'atica Aplicada  \\
	Universidade Federal do Rio Grande do Norte \\
	Natal, RN, Brazil, bedregal@dimap.ufrn.br \\
[0.4cm] Claudilene Gomes da Costa \\ Centro de Ci\^encias Exatas \\
Universidade Federal da Para\'iba  \\
Rio Tinto, PB, Brazil, claudilene@dce.ufpb.br \\ [0.4cm]  Eduardo Palmeira \\ Departamento de Ci\^encias Exatas e Tecnol\'ogicas \\
 Universidade Estadual de Santa Cruz, \\  Ilh\'eus, BA, Brazil, espalmeira@uesc.br \\ [0.4cm] Edmundo Mansilla \\
 Departamento de Matem\'atica y F\'isica \\ Universidad de Magallanes \\ Punta Arenas, Chile, edmundo.mansilla@umag.cl}

%
%
%
%
%
%

\maketitle

\begin{abstract}
Moore and Yang defined an integral notion, based on an extension of Riemann sums,  for inclusion monotonic continuous interval functions, where the integrations limits are real numbers. This integral notion extend the usual integration of real functions based on Riemann sums. In this paper, we extend this approach by considering intervals as integration limits instead of real numbers and we abolish the inclusion monotonicity restriction of the interval functions and this notion is used to determine interval probability density functions.
\end{abstract}

\paragraph{[Keywords:} Interval Mathematics, Moore and Yang integrals,
Riemann sums, interval probability density functions.

\section{Introduction}

The classical probability theory provides a mathematical model for the study of uncertainties of a random nature, also called uncertain knowledge that, even if produced identically, presents to each experiment, results that vary unpredictably. In it, random events must be precisely defined, which is often not the case in real situations. Therefore, in the classical theory of probabilities consider an absolute knowledge  without consider  the uncertainties generated from incomplete or imprecise knowledge present  in many real situations and to deal with these uncertainties,  were proposal several ways to extend the notion of probabilities  \cite{Asmus17,Costa13,Fat19,Kre04,Wei00,Yuan17}.
Within this context, many researchers have developed studies on inaccurate probabilities, in order to consider problems that are not contemplated by classical theory \cite{Jab16,Li12}. Among them, the interval mathematics has served as a theoretical framework for this purpose as one can see on the works Yager \cite{Yager84},  Sarveswaram et al. \cite{Sarv98},   Tanaka and Sugihara \cite{Tan04}, Intan \cite{Intan07}, Zhang, et al. \cite{Zhang09},  Campos and dos Santos \cite{Cam13}, and Jamison and Lodwick \cite{JL20}.

In this work we are interested in studying the interval case of the probability density functions constructed from integrals. Also known as density of a 
continuous random variable, these functions describe the probability of a random variable assuming that the value of the random variable would equal that 
sample the given value in the sample space, which can be calculated from the integral of the density of that variable in a given range, i.e. given a 
continuous variable $X$ and the values set $X(S)$, the probability density function $f$ asigns to each element $x \in X(S)$ a number $f(x)$ satisfying 
properties: (1) $f(x) > 0$ and (2) $\int f(x) = 1$. In this framework some researchers have been studied the interval version of probability density 
functions in order to provide a model where the uncertainty of the varariable is measured by an interval. For instance, Berleant in \cite{DB93} has 
developed an system to evaluate the reasoning automatically by using an intervals probability density functions. Ram\'irez \cite{PR05} used a data 
sampling interval to analyze its influence in the estimation of the parameters of the Weibull wind speed probability density distribution. In both cases 
authors has been used interval to estimate the uncertainty considering the usual version of probability density  function, i.e. 
they not consider an interval version 
of probability density function indeed. Here, we propose an interval probability density 
interpretation based on a new way to define interval integrals.

In \cite{MY59} Moore and Yang defined the first approach to
interval integrals. In \cite{MSY60} further properties of this
interval integral notions and  their relation with integration of
ordinary real functions are established. The Moore-Yang
integration of interval functions approach is based on a
generalization of Riemann sums for real integrals. Moore in
\cite{Moo79} defined integrals of continuous function from $\RR$
into $\II(\RR)$ which can be seen as restrictions of continuous
and monotonic interval functions to degenerate intervals (this is
guarantee by the dependency on the degenerate intervals of
interval integral \cite{MSY60}).

There exist some other few interval integration approaches in the
literature. Among other, are the work of Caprani, Madsen and Rall
in \cite{CMR81}, they defined  integrals for  functions (not
necessarily continuous) from an interval to the set of interval of
extended real numbers (i.e. real numbers with $-\infty$ and
$+\infty$) based on Darboux integrals. The Caprani, Madsen and
Rall approach have Moore integrals (as defined in \cite{Moo79}) as
an special case. Lately,  Rall in \cite{Ral82}, considering this
interval integration approach, partially solved the problem of
assignment of infinite intervals to some improper intervals known
to be finite. Corliss in \cite{Cor87} extend the Caprani, Madsen
and Rall  notion of integral for considering interval valued
limits. More recently, we can cite the following works on integration of interval-valued functions \cite{Bus13,Cha15,JL20}.

 The Escard\'o approach in
\cite{Esc97} adapted the Edalat work \cite{Eda95}, who extended
the Riemann integrals using Scott domain theory, for the domain of
intervals, as seen in \cite{Sco70}, resulting in a new
version of the Moore-Yang integrals, which consider Scott
continuous functions instead of continuous (w.r.to Moore metrics)
and inclusion monotonic functions (notice that all Scott
continuous interval functions are inclusion monotonic). The
relationship between both continuity notion can be found in
\cite{AB97,Bed13,SBA06}.

The Moore-Yang integrals have two restrictions:
\begin{enumerate}
\item The interval function must be inclusion monotonic and
\item The limits of integration are real numbers.
\end{enumerate}
Moore, Strother and Yang in \cite{MSY60} (pp. A-5) asseverate
that:

``Theorem 4 ({\em theorem ~\ref{teo-moo-char} in this paper})
suggests a more general definition for $\int F$ may be feasible --
namely the right side of the equality in theorem 4. This
conclusion could lead to a deletion of the condition $A\subset
B\Rightarrow F(A)\subset F(B)$.''

So,  the first restriction, is not necessary and can be suppressed
considering a more general definition of the integrals based on a
characterization of the interval integral in term of an interval
of  real integrals. But, taken this definitions as primitive
implies  that the integral interval notion is not a generalization
of a real integral approach, resulting in a notion with a poor
mathematical foundation.

This paper define an integral for interval functions which extend
the Moore-Yang approach eliminating both restrictions. We also
give a characterization of this extension in terms of the extremes
of the limits of integration which could simplify its computation.

Finally, we use this integral notion to determine probability density functions \cite{Eva00} in the context of interval probability 
\cite{Cam00,Lod08,Tes92}.

\section{Preliminary}

Let $\II(\RR)$ be the set of real closed intervals, or simply
intervals.  We will use upper letters at start of alphabet to
indicate an interval. The left and right extremes of an interval
$A$ will be denoted by $\underline{a}$ and $\overline{a}$
respectively, thus $A=[\underline{a},\overline{a}]$. We define two
projections for intervals: $\pi_1(A)=\underline{a}$ and
$\pi_2(A)=\overline{a}$.

The partial order that we will use for intervals will be the
Kulisch-Miranker one \cite{KM81}, i.e.

$$A\leq B\Leftrightarrow \underline{a}\leq \underline{b}\mbox{ and
}\overline{a}\leq\overline{b}.$$

Particularly,

$$A\ll B\Leftrightarrow \underline{a} < \underline{b}\mbox{ and
}\overline{a}< \overline{b}.$$

The partial order used in the Moore and Yang integral approach was
 the inclusion of sets, i.e.

$$A\subseteq B\Leftrightarrow \underline{b}\leq \underline{a}
\leq \overline{a}\leq\overline{b}.$$

The arithmetical operations on intervals are defined as follow

$$A\oplus B=\{x\oplus y\;\mid\; x\in  A\mbox{ and }y\in B\},$$

\noindent where $\oplus$ is any one of the usual  arithmetical
operations. The unique restriction is that in the case of
division, $B$ can not contain $0$. Each operations can be
characterized in terms of their extremes as follows:

$$A\oplus B=[\min\{\underline{a}\oplus \underline{b},\underline{a}\oplus
\overline{b},\overline{a}\oplus \underline{b} ,\overline{a}\oplus
\overline{b}\}, \max\{\underline{a}\oplus
\underline{b},\underline{a}\oplus \overline{b},\overline{a}\oplus
\underline{b} ,\overline{a}\oplus \overline{b}\}].$$

For an abuse of language, given a real number $r$, we will write $A\oplus r$ and $r\oplus A$ instead of $A\oplus [r,r]$ and $[r,r]\oplus A$, respectively.

In the case of addition, this expression can be abbreviated as

$$A+B=[\underline{a}+\underline{b},\overline{a}+\overline{b}].$$

If either $\underline{a}\geq 0$ or $\underline{b}\geq 0$ then
$$AB=[\underline{a}\underline{b},\overline{a}\overline{b}].$$

 Functions whose domain and co-domain  are subsets of  $\II(\RR)$
are called interval functions. Let $F: \mathcal{I}\lra \II(\RR)$
be an interval function. Define the functions $\underline{F}:
\mathcal{I}\lra \RR$ and $\overline{F}: \mathcal{I}\lra \RR$ by
$\underline{F}(A)=\pi_1(F(A))$ and $\overline{F}(A)=\pi_2(F(A))$.
Trivially, $F(A)=\left [\underline{F}(A), \overline{F}(A)\right]$.
Sometimes $F$ will be denoted by $[\underline{F},\overline{F}]$.
An interval function $F$ is said to be an inclusion monotonic
function if it is monotonic w.r.to the inclusion, i.e. $A\subseteq
B\Ra F(A)\subseteq F(B)$.

A distance between two intervals is defined by:

$$d_M(A,B)=max\{\mid \underline{b}-\underline{a}\mid,
\mid\overline{b}-\overline{a}\mid\}.$$

In \cite{Moo62} was proved that $d_M$ is a metric. So, an interval
function $F$ is continuous, if it is continuous w.r.to the metric
$d_M$.

\section{Moore Approach}

In this section we will overview the main definitions of the Moore
and Yang interval integral approach.

\begin{definition}
Let $A$ be a real interval. A {\bf partition} of $A$ is a sequence
  $\mathcal{T}=\{\underline{a}=x_0,x_1,\ldots
,x_n=\overline{a}\}$ such that for each $i=0,\ldots,n-1$, $x_i<
x_{i+1}$. The set of all partition of $A$ will be denoted by
$\frak{T}(A)$.
\end{definition}

\begin{definition} Let $A$ be a real interval. A partition $\mathcal{T}_1$
of $A$ {\bf is finer} than the partition $\mathcal{T}_2$ of $A$,
denoted by $\mathcal{T}_1\preceq \mathcal{T}_2$,  if
$\mathcal{T}_2\subseteq \mathcal{T}_1$.
\end{definition}

Clearly, $\preceq$ is a partial order on $\frak{T}(A)$.

\begin{proposition} \label{pro-part-lat1}
Let $A$ be a real interval. $\prt{\frak{T}(A),\preceq}$ is a
lattice with greatest  element.
\end{proposition}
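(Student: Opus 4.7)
The plan is to identify explicit candidates for the binary meet, the binary join, and the greatest element, and then verify that they behave correctly under the order $\preceq$, recalling that $\mathcal{T}_1\preceq \mathcal{T}_2$ is by definition the reverse of set inclusion on the division points ($\mathcal{T}_2\subseteq \mathcal{T}_1$). Throughout, I identify a partition with the finite set of its division points; the required strictly increasing sequence is recovered by sorting the reals.

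First I would handle the greatest element: since every partition of $A$ is required to begin with $\underline{a}$ and end with $\overline{a}$, the two-point partition $\mathcal{T}^\top=\{\underline{a},\overline{a}\}$ satisfies $\mathcal{T}^\top\subseteq \mathcal{T}$ for every $\mathcal{T}\in\frak{T}(A)$, which, by definition of $\preceq$, gives $\mathcal{T}\preceq \mathcal{T}^\top$. Thus $\mathcal{T}^\top$ is the $\preceq$-maximum.

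For the meet, take $\mathcal{T}_1\wedge \mathcal{T}_2:=\mathcal{T}_1\cup \mathcal{T}_2$, sorted in increasing order. This is a finite subset of $[\underline{a},\overline{a}]$ containing both endpoints, hence a legitimate element of $\frak{T}(A)$. Since $\mathcal{T}_i\subseteq \mathcal{T}_1\cup\mathcal{T}_2$ for $i=1,2$, we get $\mathcal{T}_1\cup \mathcal{T}_2\preceq \mathcal{T}_i$, so the union is a lower bound. If $\mathcal{T}$ is any other common lower bound, then $\mathcal{T}_1\subseteq \mathcal{T}$ and $\mathcal{T}_2\subseteq \mathcal{T}$, whence $\mathcal{T}_1\cup \mathcal{T}_2\subseteq \mathcal{T}$, i.e.\ $\mathcal{T}\preceq \mathcal{T}_1\cup \mathcal{T}_2$. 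Dually, I would set $\mathcal{T}_1\vee \mathcal{T}_2:=\mathcal{T}_1\cap \mathcal{T}_2$; both partitions contain $\underline{a}$ and $\overline{a}$, so the intersection is a non-empty finite set containing the endpoints, and therefore a valid partition. The universal property for the least upper bound follows by the same reasoning with the inclusion reversed.

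The only real obstacle is bookkeeping: making sure $\mathcal{T}_1\cup \mathcal{T}_2$ and $\mathcal{T}_1\cap \mathcal{T}_2$ really are partitions in the sense of Definition~3.1 (they are finite, they contain the prescribed endpoints, and sorting yields a strictly increasing sequence because we are dealing with sets of distinct reals). Once this is observed, the fact that $\prt{\frak{T}(A),\preceq}$ is a lattice is just the transport of the standard lattice structure of $(\mathcal{P}([\underline{a},\overline{a}]),\subseteq)$ along the order-reversing identification, restricted to the sublattice of finite subsets containing $\{\underline{a},\overline{a}\}$, which is closed under both $\cup$ and $\cap$.
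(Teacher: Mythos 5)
Your proposal is correct and follows exactly the paper's argument: the meet is the union of the point sets, the join is the intersection, and the two-point partition $\{\underline{a},\overline{a}\}$ is the greatest element. The paper dismisses the verification as trivial, whereas you spell out the universal properties and the closure of finite endpoint-containing subsets under $\cup$ and $\cap$; this is the same proof, just written in full.
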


\proof{ Let $\mathcal{T}_1$ and $\mathcal{T}_2$ be partitions of
$A$. Then trivially, $\mathcal{T}_1\cap \mathcal{T}_2$ and
$\mathcal{T}_1\cup \mathcal{T}_2$ are the {\it supremum} and {\it
infimum}, respectively,  of $\mathcal{T}_1$ and $\mathcal{T}_2$.
The greatest element of $\frak{T}(A)$ is the partition
$\{\underline{a},\overline{a}\}$. \hfill \small $\blacksquare$}

\begin{definition}
Let $A$  be an interval and $F$ be an inclusion monotonic
continuous interval function. The {\bf Riemann sum} of $F$ w.r.to
a partition $\mathcal{T}$ of $A$ is defined by:

$$\sum(F,\mathcal{T})=
{\displaystyle \sum_{k=0}^{n-1}}F([x_k,x_{k+1}])d(x_k,x_{k+1}),$$

\noindent where $d$ is the usual metric on the real numbers, i.e.
$d(x,y)=\brs{x-y}$.

 The Moore and Yang integral of $F$ at the
interval $A$ is defined by

$$\int_A F(X)dX=\bigcap_{\mathcal{T}\in \frak{T}(A)}
\sum(F,\mathcal{T}).$$
\end{definition}

\begin{theorem} (Characterization theorem) \label{teo-moo-char}
Let $A$  be an interval and $F$ be an inclusion monotonic
continuous interval function. Then

$$\int_A F(X)dX=\left [ \int_{\underline{a}}^{\overline{a}} f_l(x)dx, \int_{\underline{a}}^{\overline{a}} f_r(x)dx\right ],$$

\noindent where $f_l(x)=\pi_1 F[x,x]$ and $f_r(x)=\pi_2 F[x,x]$.
\end{theorem}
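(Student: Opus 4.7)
The plan is to establish the equality by proving both inclusions
$\left[\int_{\underline{a}}^{\overline{a}} f_l(x)dx,\int_{\underline{a}}^{\overline{a}} f_r(x)dx\right] \subseteq \bigcap_{\mathcal{T}} \sum(F,\mathcal{T})$
and its reverse.

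First, I would unfold the Riemann sum. Because each width $x_{k+1}-x_k$ is strictly positive, the product $F([x_k,x_{k+1}])(x_{k+1}-x_k)$ has endpoints $\underline{F}([x_k,x_{k+1}])(x_{k+1}-x_k)$ and $\overline{F}([x_k,x_{k+1}])(x_{k+1}-x_k)$, so the abbreviated sum formula for intervals yields
\[
\sum(F,\mathcal{T}) = \left[\sum_{k=0}^{n-1}\underline{F}([x_k,x_{k+1}])(x_{k+1}-x_k),\ \sum_{k=0}^{n-1}\overline{F}([x_k,x_{k+1}])(x_{k+1}-x_k)\right].
\]
Inclusion monotonicity of $F$ gives $F([x,x])\subseteq F([x_k,x_{k+1}])$ whenever $x\in[x_k,x_{k+1}]$, i.e.\ $\underline{F}([x_k,x_{k+1}])\leq f_l(x)\leq f_r(x)\leq \overline{F}([x_k,x_{k+1}])$. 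Integrating these pointwise bounds on each subinterval and summing produces $\underline{\sum(F,\mathcal{T})}\leq \int_{\underline{a}}^{\overline{a}} f_l(x)dx$ and $\overline{\sum(F,\mathcal{T})}\geq \int_{\underline{a}}^{\overline{a}} f_r(x)dx$, so the interval on the right-hand side of the theorem is contained in every Riemann sum $\sum(F,\mathcal{T})$ and hence in their intersection.

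For the reverse inclusion, I would show that the supremum of the left endpoints and the infimum of the right endpoints of $\sum(F,\mathcal{T})$ over all partitions coincide with $\int_{\underline{a}}^{\overline{a}} f_l(x)dx$ and $\int_{\underline{a}}^{\overline{a}} f_r(x)dx$. Given $\epsilon>0$, the Moore-continuity of $F$ (and uniform continuity on the compact family of subintervals of $A$) supplies a $\delta>0$ such that $d_M(B_1,B_2)<\delta$ implies $d_M(F(B_1),F(B_2))<\epsilon/(\overline{a}-\underline{a})$. For any partition $\mathcal{T}$ of mesh below $\delta$ and any sample points $\xi_k\in[x_k,x_{k+1}]$, this yields $|\underline{F}([x_k,x_{k+1}])-f_l(\xi_k)|<\epsilon/(\overline{a}-\underline{a})$, whence $|\underline{\sum(F,\mathcal{T})}-\sum_k f_l(\xi_k)(x_{k+1}-x_k)|<\epsilon$. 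Since $f_l=\pi_1\circ F\circ(x\mapsto[x,x])$ is a composition of continuous maps and hence Riemann integrable, a further refinement places the ordinary Riemann sum within $\epsilon$ of $\int_{\underline{a}}^{\overline{a}} f_l(x)dx$; the analogous argument handles $\overline{F}$ and $f_r$. Combined with the upper bound already obtained, this delivers equality of both extremes and therefore of the intervals.

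The step I expect to be the main obstacle is this last approximation argument: one must bridge the interval-valued Moore-continuity of $F$ with the scalar continuity of the endpoint selectors $\underline{F}([x_k,x_{k+1}])$ and $\overline{F}([x_k,x_{k+1}])$ as the mesh shrinks, and only then compare the Darboux-like sums built from $F$ with ordinary Riemann sums of $f_l$ and $f_r$. Everything else is essentially algebraic bookkeeping with the interval addition formula and with the inclusion monotonicity of $F$.
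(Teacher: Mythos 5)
Your argument is correct, but there is nothing in the paper to compare it against: the paper's ``proof'' of Theorem~\ref{teo-moo-char} is only the citation ``See \cite{MSY60}'', so you have in effect supplied the proof the paper omits. Your two halves fit together properly: since every $\sum(F,\mathcal{T})=[\ell_{\mathcal{T}},r_{\mathcal{T}}]$ with $\ell_{\mathcal{T}}=\sum_k\underline{F}([x_k,x_{k+1}])(x_{k+1}-x_k)$ and $r_{\mathcal{T}}=\sum_k\overline{F}([x_k,x_{k+1}])(x_{k+1}-x_k)$ (scalar multiplication by the positive width and the endpoint formula for interval addition), and since inclusion monotonicity gives $\ell_{\mathcal{T}}\leq\int_{\underline{a}}^{\overline{a}}f_l\leq\int_{\underline{a}}^{\overline{a}}f_r\leq r_{\mathcal{T}}$ for every $\mathcal{T}$, the family of Riemann sums has a common point, so $\bigcap_{\mathcal{T}}\sum(F,\mathcal{T})=\left[\sup_{\mathcal{T}}\ell_{\mathcal{T}},\inf_{\mathcal{T}}r_{\mathcal{T}}\right]$ --- a step you use implicitly and which is worth stating, since it is exactly what reduces the reverse inclusion to the two scalar limits you then compute. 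The uniform-continuity bridge in the second half is sound: the set of subintervals of $A$ is compact under $d_M$ (it is a closed bounded subset of $\RR^2$), $d_M$ controls both endpoint selectors simultaneously, and $f_l,f_r$ are continuous as compositions of continuous maps, so partitions of small mesh make $\ell_{\mathcal{T}}$ and $r_{\mathcal{T}}$ approach the two Riemann integrals. The only caveats are cosmetic: the degenerate case $\underline{a}=\overline{a}$ must be set aside before dividing by $\overline{a}-\underline{a}$, and no ``further refinement'' is needed in the last step --- a single mesh bound serving both the uniform-continuity estimate and the convergence of the ordinary Riemann sums suffices.
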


\proof{See \cite{MSY60}. \hfill \small $\blacksquare$}

\section{Our Extension}

\begin{definition}
Let $A$ and $B$ be two  real intervals such that $A\rojo{\ll} B$.
Define,

$$\mathcal{I}_{[A,B]}=\{X\in\II(\RR)\;\mid\; A\leq X\leq B\}$$

\noindent and $\overline{AB}=\left \{X\;\mid\;
\underline{x}=x\mbox{ and
}\overline{x}=\overline{a}+\frac{\displaystyle
\overline{b}-\overline{a}}{\displaystyle
\underline{b}-\underline{a}}(x-\underline{a})\mbox{ for some }x\in
[\underline{a},\underline{b}]\right \}$.
\end{definition}

Since $A\ll B$, then $\underline{a}<\underline{b}$ and
$\overline{a}<\overline{b}$. In what follows, without lost of
generality, we will suppose that $\underline{a}<\underline{b}$.

Given a metric space $(\mathcal{X},d)$ and a subset $A$ of $\mathcal{X}$ we define the
diameter of $A$, denoted by $diam(A)$, by

$$diam(A)=\sup\{d(a,b)\mid a,b\in A\}.$$

\begin{definition} \label{bounded_def}
A subset $A$ of a metric space $(\mathcal{X},d)$ is {\bf bounded} if
$diam(A)$ is finite
\end{definition}

\begin{definition}
A function of a non-empty set into a metric space is called a {\bf
bounded function} if its image is a bounded set.
\end{definition}

Clearly a subset $\mathcal{I}$ of $\II(\RR)$ is bounded if, and
only if, it is contained in $\mathcal{I}_{[A,B]}$ for some $A,B\in
\II(\RR)$ such that $A\ll B$. Thus, an interval function $F:\mathcal{I}\lra\II(\RR)$,
where $\mathcal{I}\subseteq\II(\RR)$, is bounded if, and only if,
its image is contained in $\mathcal{I}_{[A,B]}$ for some $A,B\in
\II(\RR)$. That is, if $A\leq F(X)\leq B \;\forall X\in
\mathcal{I}$. Therefore, the topological and order based notions
of boundedness for an interval function
$F:\mathcal{I}\lra\II(\RR)$ coincides.

 Let $\phi :\II(\RR)\lra{\RR}^2$ be the function defined by,
 $\phi ([a,b])=(a,b)$. Clearly
the function $\phi$ is continuous and injective. Thus, we can
identify $\II(\RR)$ with a subspace of $\RR^2$, where $\RR^2$ is
considered here with the metric,

$$d((\underline{a}, \overline{a}), (\underline{b}, \overline{b}))=max\{\mid \underline{b}-\underline{a}\mid,
\mid\overline{b}-\overline{a}\mid\}.$$

This is not restrictive since usual metrics in $\RR^n$ are
equivalents, from the topological point-of-view.

Notice that $\mathcal{I}_{[A,B]}$ is closed and bounded in
$\RR^2$, hence, by the Heine-Borel theorem \cite{Sim63} p. 119, it
is compact.

\begin{lemma}(Completeness Lemma) \label{lemma_comp}
Let $X$ be a bounded subset of $\II(\RR)$. Define
$\underline{X}=\{\underline{c}\; \mid\; C\in X\}$ and
$\overline{X}=\{\overline{c}\; \mid\; C\in X\}$. Then, $\bigsqcup
X=\left [\bigsqcup \underline{X},\bigsqcup \overline{X}\right ]$
and $\bigsqcap X=\left [\bigsqcap \underline{X},\bigsqcap
\overline{X}\right ]$ where $\bigsqcap \underline{X} = \inf \underline{X}$ and $\bigsqcup \underline{X} = \sup \underline{X}$.
\end{lemma}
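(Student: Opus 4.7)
The plan is to prove the two equalities directly from the definitions of supremum and infimum in the partial order $\leq$ (Kulisch-Miranker) on $\II(\RR)$. Since $X$ is bounded, it sits inside some $\mathcal{I}_{[A,B]}$, so $\underline{X}$ and $\overline{X}$ are bounded subsets of $\RR$ and consequently admit suprema and infima by the completeness of the real line. This guarantees that the four real numbers $\sup\underline{X},\sup\overline{X},\inf\underline{X},\inf\overline{X}$ appearing in the statement actually exist, which is the first thing to check.

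Next I would verify that the candidate intervals $S=[\sup\underline{X},\sup\overline{X}]$ and $I=[\inf\underline{X},\inf\overline{X}]$ are well defined, i.e.\ that $\sup\underline{X}\leq\sup\overline{X}$ and $\inf\underline{X}\leq\inf\overline{X}$. Both inequalities reduce to the observation that $\underline{c}\leq\overline{c}$ for every $C\in X$: for any fixed $C_0\in X$ one gets $\underline{c}\leq\overline{c}\leq\sup\overline{X}$ for every $C\in X$, whence $\sup\underline{X}\leq\sup\overline{X}$, and similarly for the infimum. This is the only step that uses something beyond pure lattice-theoretic manipulation, but it is a single-line observation.

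I would then establish $\bigsqcup X=S$ in two parts. First, $S$ is an upper bound: for every $C\in X$, $\underline{c}\leq\sup\underline{X}$ and $\overline{c}\leq\sup\overline{X}$, so $C\leq S$. Second, $S$ is the least upper bound: if $T=[\underline{t},\overline{t}]$ satisfies $C\leq T$ for all $C\in X$, then $\underline{t}$ is an upper bound of $\underline{X}$ and $\overline{t}$ is an upper bound of $\overline{X}$, so $\sup\underline{X}\leq\underline{t}$ and $\sup\overline{X}\leq\overline{t}$, i.e.\ $S\leq T$. The proof that $\bigsqcap X=I$ is entirely symmetric, replacing ``upper'' by ``lower'' and $\sup$ by $\inf$ throughout.

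I do not foresee any real obstacle; the argument is a routine transfer of the completeness of $(\RR,\leq)$ through the componentwise order on $\II(\RR)$. The only point requiring a moment of care is making sure the resulting pairs of real numbers really form valid intervals in $\II(\RR)$ (i.e.\ left endpoint $\leq$ right endpoint), which, as noted above, follows immediately from $\underline{c}\leq\overline{c}$.
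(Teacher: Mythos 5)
Your argument is correct and follows essentially the same route as the paper: both reduce the claim to the completeness of $\RR$ applied componentwise to the bounded sets $\underline{X}$ and $\overline{X}$; you simply spell out the ``it is easy to see'' step (the least-upper-bound and greatest-lower-bound verifications, plus the check that the candidate endpoints form valid intervals) that the paper leaves implicit.
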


\proof{ If $X$ is a bounded subset of $\II(\RR)$ then $X\subseteq
\mathcal{I}_{[A,B]}$ for some $A,B\in \II(\RR)$. Clearly
$\underline{a}$ is a lower bound of $\underline{X}$ and
$\overline{X}$ and $\overline{b}$ is an upper bound of
$\underline{X}$ and $\overline{X}$. Since each upper bounded
subset of the real numbers has a {\it supremum} and each lower
bounded subset of the real numbers has an {\it infimum}, then
$\underline{X}$ and $\overline{X}$ has {\it supremum} and {\it
infimum}. It is easy to see that $\bigsqcup X=\left [\bigsqcup
\underline{X},\bigsqcup \overline{X}\right ]$ and $\bigsqcap
X=\left [\bigsqcap \underline{X},\bigsqcap \overline{X}\right ]$.
\hfill \small $\blacksquare$}

\begin{corollary}
Let $\mathcal{I}\subseteq\II(\RR)$ and $F:\mathcal{I}\lra
\II(\RR)$ be a bounded interval function. Then, the set
$F(\mathcal{I})=\{F(X)\in \II(\RR)\mid X\in \mathcal{I}\}$ has
{\it supremum} and {\it infimum}. In fact, if
$F(X)=[\underline{F}(X), \overline{F}(X)]$ then,

$$\bigsqcup F(\mathcal{I})=\left [\bigsqcup \underline{F}(\mathcal{I}), \bigsqcup\overline{F}(\mathcal{I})\right ]$$

\noindent and

$$\Bigsqcap\; F(\mathcal{I})=\left [\Bigsqcap \underline{F}(\mathcal{I}), \Bigsqcap
\overline{F}(\mathcal{I})\right ].$$
\end{corollary}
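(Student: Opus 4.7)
The plan is to apply the Completeness Lemma directly, so the proof amounts to matching notation and verifying that its hypothesis holds for $X := F(\mathcal{I})$.

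First I would observe that since $F : \mathcal{I} \lra \II(\RR)$ is by assumption a bounded interval function, its image $F(\mathcal{I})$ is a bounded subset of $\II(\RR)$ in the sense of Definition \ref{bounded_def} (equivalently, $F(\mathcal{I}) \subseteq \mathcal{I}_{[A,B]}$ for some $A \ll B$). This is precisely the hypothesis required by the Completeness Lemma.

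Next I would unfold the notation. Applying the lemma with $X = F(\mathcal{I})$, the associated projections become
\[
\underline{X} = \{\pi_1(F(Y)) \mid Y \in \mathcal{I}\} = \{\underline{F}(Y) \mid Y \in \mathcal{I}\} = \underline{F}(\mathcal{I}),
\]
and analogously $\overline{X} = \overline{F}(\mathcal{I})$. The Completeness Lemma then asserts that both $\bigsqcup F(\mathcal{I})$ and $\Bigsqcap F(\mathcal{I})$ exist and are given by
\[
\bigsqcup F(\mathcal{I}) = \left[\bigsqcup \underline{F}(\mathcal{I}),\; \bigsqcup \overline{F}(\mathcal{I})\right], \quad \Bigsqcap F(\mathcal{I}) = \left[\Bigsqcap \underline{F}(\mathcal{I}),\; \Bigsqcap \overline{F}(\mathcal{I})\right],
\]
which is exactly the statement of the corollary.

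There is no real obstacle here: the corollary is essentially a restatement of the Completeness Lemma applied to the image of a bounded interval function, together with the observation that the lower (resp.\ upper) projections of $F(\mathcal{I})$ coincide with $\underline{F}(\mathcal{I})$ (resp.\ $\overline{F}(\mathcal{I})$). The only thing one might want to emphasize is why boundedness of $F$ in the order-theoretic sense (existence of $A, B$ with $A \leq F(X) \leq B$ for all $X$) agrees with the topological notion required by the lemma; but this equivalence was already explicitly noted in the paragraph immediately following Definition \ref{bounded_def}, so I would merely cite it.
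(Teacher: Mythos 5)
Your proof is correct and follows exactly the same route as the paper's: both note that boundedness of $F$ gives $F(\mathcal{I})\subseteq \mathcal{I}_{[A,B]}$ and then invoke the Completeness Lemma with $X=F(\mathcal{I})$. Your version merely spells out the notational identification of $\underline{X}$ and $\overline{X}$ with $\underline{F}(\mathcal{I})$ and $\overline{F}(\mathcal{I})$, which the paper leaves implicit.
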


\proof{ Since $F$ is a bounded interval function we have that
$F(\mathcal{I})\subseteq \mathcal{I}_{[A,B]}$ for some $A,B\in
\II(\RR)$. The statements follows from the Completeness lemma, by
taking $X=F(\mathcal{I})$. \hfill \small $\blacksquare$}

\begin{lemma}\label{lemma-sup}
Let $\mathcal{I}$ and $\mathcal{J}$ be bounded subsets of
$\II(\RR)$. Let

$$\mathcal{I} +\mathcal{J}=\{X+Y\mid X\in \mathcal{I}\;,Y\in\mathcal{J}\}.$$

Then, $\bigsqcap (\mathcal{I} +\mathcal{J})=\bigsqcap
(\mathcal{I}) +\bigsqcap(\mathcal{J})$ and $\bigsqcup (\mathcal{I}
+\mathcal{J})=\bigsqcup (\mathcal{I}) +\bigsqcup(\mathcal{J})$
\end{lemma}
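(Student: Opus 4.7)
\medskip

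\noindent\textbf{Proof plan for Lemma \ref{lemma-sup}.} The plan is to reduce the claim about suprema and infima of interval sets to the corresponding well-known statements about suprema and infima of real sets, using the Completeness Lemma as the bridge. First I would verify that $\mathcal{I}+\mathcal{J}$ is itself bounded, so that its supremum and infimum actually exist in $\II(\RR)$: if $\mathcal{I}\subseteq \mathcal{I}_{[A_1,B_1]}$ and $\mathcal{J}\subseteq \mathcal{I}_{[A_2,B_2]}$, then the characterization of interval addition by its endpoints, $X+Y=[\underline{x}+\underline{y},\overline{x}+\overline{y}]$, immediately gives $\mathcal{I}+\mathcal{J}\subseteq \mathcal{I}_{[A_1+A_2,B_1+B_2]}$.

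Next, using the same endpoint formula for interval addition, I would identify the sets $\underline{\mathcal{I}+\mathcal{J}}$ and $\overline{\mathcal{I}+\mathcal{J}}$ as sets of real numbers. For any $Z\in \mathcal{I}+\mathcal{J}$ there exist $X\in\mathcal{I}$ and $Y\in\mathcal{J}$ with $Z=X+Y$, hence $\underline{z}=\underline{x}+\underline{y}$ and $\overline{z}=\overline{x}+\overline{y}$. This yields the set equalities
$$\underline{\mathcal{I}+\mathcal{J}}=\underline{\mathcal{I}}+\underline{\mathcal{J}} \quad\text{and}\quad \overline{\mathcal{I}+\mathcal{J}}=\overline{\mathcal{I}}+\overline{\mathcal{J}},$$
where on the right the $+$ is the Minkowski sum of real sets.

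Now I would invoke the classical real-analysis fact that for bounded nonempty subsets $S,T\subseteq\RR$, $\sup(S+T)=\sup S+\sup T$ and $\inf(S+T)=\inf S+\inf T$. Combining this with the displayed set equalities gives
$$\bigsqcup\underline{\mathcal{I}+\mathcal{J}}=\bigsqcup\underline{\mathcal{I}}+\bigsqcup\underline{\mathcal{J}}, \qquad \bigsqcup\overline{\mathcal{I}+\mathcal{J}}=\bigsqcup\overline{\mathcal{I}}+\bigsqcup\overline{\mathcal{J}},$$
and the analogous identities for infima.

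Finally, I would apply the Completeness Lemma \ref{lemma_comp} to each of $\mathcal{I}$, $\mathcal{J}$, and $\mathcal{I}+\mathcal{J}$ (all bounded), expressing each supremum and infimum as an interval of real extrema, and then use the endpoint formula for interval addition once more to reassemble the right-hand sides. The two identities in the lemma then drop out directly. I do not expect any real obstacle here: the only delicate point is making sure the reduction step $\underline{\mathcal{I}+\mathcal{J}}=\underline{\mathcal{I}}+\underline{\mathcal{J}}$ is written as an equality of sets (both inclusions are immediate from the endpoint formula), so that the classical real Minkowski-sum identity applies cleanly.
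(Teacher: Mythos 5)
Your proposal is correct and follows essentially the same route as the paper, whose proof is just the one-line remark that the result follows from the Completeness Lemma, the endpoint characterization of interval addition, and general properties of suprema and infima of real sets; you have simply written out those three ingredients explicitly (including the useful observation that $\underline{\mathcal{I}+\mathcal{J}}=\underline{\mathcal{I}}+\underline{\mathcal{J}}$ as a Minkowski sum of real sets). No gaps beyond the implicit nonemptiness of $\mathcal{I}$ and $\mathcal{J}$, which the paper also leaves tacit.
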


\proof{ The result  follows by the proof of Lemma \ref{lemma_comp}, by
the characterization of the interval addition and by general
properties of {\it supremum} and {\it infimum}. \hfill \small
$\blacksquare$}

\begin{definition}
A set $\mathcal{P}$ is a {\bf partition} of $\mathcal{I}_{[A,B]}$
if there exists partitions $\mathcal{T}_1$ and $\mathcal{T}_2$ of
$[\underline{a},\underline{b}]$ and $[\overline{a},\overline{b}]$,
respectively, such that $\mathcal{P}=
(\mathcal{T}_1\times\mathcal{T}_2)\cap\II(\RR)$.
\end{definition}

\begin{remark} Sometimes we say that the partition $\mathcal{P}$
of $\mathcal{I}_{[A,B]}$ cames from the partitions $\mathcal{T}_1$
and $\mathcal{T}_2$ of $[\underline{a},\underline{b}]$ and
$[\overline{a},\overline{b}]$, respectively, or simply, that
$\mathcal{P}$ of $\mathcal{I}_{[A,B]}$ cames from
$\mathcal{T}_1\times\mathcal{T}_2$.
\end{remark}

\begin{definition}
We say that a partition $\mathcal{P}'$ of $\mathcal{I}_{[A,B]}$,
 is {\bf finer than} the partition $\mathcal{P}$
of $\mathcal{I}_{[A,B]}$,  denoted by $\mathcal{P}'\preccurlyeq
\mathcal{P}$, if $\mathcal{P}\subseteq \mathcal{P}'$.
\end{definition}

\begin{lemma} \label{lemma-part}
Let $\frak{P}[A,B]=\{\mathcal{P}\;\mid\;\mathcal{P}$ is a
partition of $\mathcal{I}_{[A,B]}\}$. Then
$\prt{\frak{P}[A,B],\preccurlyeq}$ is a lattice with greatest
element.
\end{lemma}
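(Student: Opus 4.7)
The plan is to transfer the lattice structure of Proposition~\ref{pro-part-lat1} from real partitions to $\frak{P}[A,B]$ componentwise, using that every interval partition $\mathcal{P}\in\frak{P}[A,B]$ arises from a uniquely determined pair of real partitions $(\mathcal{T}_1,\mathcal{T}_2)$, with $\mathcal{T}_1\in\frak{T}([\underline{a},\underline{b}])$ and $\mathcal{T}_2\in\frak{T}([\overline{a},\overline{b}])$.

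The first auxiliary step is a \emph{recoverability observation}: if $\mathcal{P}$ comes from $\mathcal{T}_1\times\mathcal{T}_2$, then $\mathcal{T}_1=\{\underline{x}\mid X\in\mathcal{P}\}$ and $\mathcal{T}_2=\{\overline{x}\mid X\in\mathcal{P}\}$. The nontrivial inclusion is that every $x\in\mathcal{T}_1$ actually appears as a left endpoint of some $X\in\mathcal{P}$; this holds because $\overline{b}\in\mathcal{T}_2$ and $x\leq\underline{b}\leq\overline{b}$, so $[x,\overline{b}]\in\II(\RR)$ lies in $\mathcal{P}$. The analogous argument with $\underline{a}\in\mathcal{T}_1$ handles $\mathcal{T}_2$. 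As a consequence, whenever $\mathcal{P}'\subseteq\mathcal{P}$ with $\mathcal{P}',\mathcal{P}$ coming from $(\mathcal{T}'_1,\mathcal{T}'_2)$ and $(\mathcal{T}_1,\mathcal{T}_2)$, one automatically has $\mathcal{T}'_i\subseteq\mathcal{T}_i$ for $i=1,2$.

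Given $\mathcal{P}_1,\mathcal{P}_2$ from $(\mathcal{T}^1_1,\mathcal{T}^1_2)$ and $(\mathcal{T}^2_1,\mathcal{T}^2_2)$, I will define $\mathcal{P}_\vee$ as the partition from $(\mathcal{T}^1_1\cup\mathcal{T}^2_1,\,\mathcal{T}^1_2\cup\mathcal{T}^2_2)$ and $\mathcal{P}_\wedge$ as the partition from $(\mathcal{T}^1_1\cap\mathcal{T}^2_1,\,\mathcal{T}^1_2\cap\mathcal{T}^2_2)$; the latter is well-defined because each intersection still contains the two endpoints of the respective real interval. The obvious inclusion of Cartesian products gives $\mathcal{P}_j\subseteq\mathcal{P}_\vee$, so $\mathcal{P}_\vee$ is a common lower bound under $\preccurlyeq$; and by the recoverability observation, any common lower bound $\mathcal{P}$ coming from $(\mathcal{T}_1,\mathcal{T}_2)$ satisfies $\mathcal{T}^j_i\subseteq\mathcal{T}_i$, so $\mathcal{T}^1_i\cup\mathcal{T}^2_i\subseteq\mathcal{T}_i$ and thus $\mathcal{P}_\vee\subseteq\mathcal{P}$, i.e.\ $\mathcal{P}\preccurlyeq\mathcal{P}_\vee$. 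Hence $\mathcal{P}_\vee$ is the infimum; the argument for $\mathcal{P}_\wedge$ being the supremum is dual. The greatest element of $\frak{P}[A,B]$ is then the partition coming from $(\{\underline{a},\underline{b}\},\{\overline{a},\overline{b}\})$, built from the greatest elements of the two real partition lattices.

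The only place that requires care is the recoverability observation, since the intersection with $\II(\RR)$ in the definition of a partition could in principle discard coordinates when $A$ and $B$ are arranged so that $\underline{b}>\overline{a}$. The hypothesis $A\ll B$ together with the presence of the endpoints $\underline{a}\in\mathcal{T}_1$ and $\overline{b}\in\mathcal{T}_2$ prevents this, and once recoverability is in hand the rest is a formal transfer of Proposition~\ref{pro-part-lat1} to the product of the two real partition lattices.
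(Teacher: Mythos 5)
Your proof is correct and follows the same route as the paper: it reduces the lattice structure of $\frak{P}[A,B]$ to that of the two real partition lattices from Proposition~\ref{pro-part-lat1}, with the meet and join formed componentwise and the greatest element coming from $\{\underline{a},\underline{b}\}\times\{\overline{a},\overline{b}\}$. You are in fact somewhat more careful than the paper on two points it glosses over, namely the recoverability of $(\mathcal{T}_1,\mathcal{T}_2)$ from $\mathcal{P}$ (which is what justifies the extremality of the candidate bounds) and the fact that the infimum must be taken as the partition coming from $(\mathcal{T}_1\cup\mathcal{T}'_1)\times(\mathcal{T}_2\cup\mathcal{T}'_2)$ rather than from the union of the two Cartesian products, which need not itself be a Cartesian product.
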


\proof{ Let $\mathcal{P}$ and $\mathcal{P}'$ be partitions of
$\mathcal{I}_{[A,B]}$ coming from
$\mathcal{T}_1\times\mathcal{T}_2$ and
$\mathcal{T}'_1\times\mathcal{T}'_2$, respectively. Then
trivially, from definition of $\preccurlyeq$ and proposition
~\ref{pro-part-lat1} we have that $\mathcal{P}\sqcup\mathcal{P}'$
is the partition coming from
$\mathcal{T}_1\times\mathcal{T}_2\cap\mathcal{T}'_1\times\mathcal{T}'_2$.
Analogously, we have that $\mathcal{P}\sqcap\mathcal{P}'$ is the
partition coming from
$\mathcal{T}_1\times\mathcal{T}_2\cup\mathcal{T}'_1\times\mathcal{T}'_2$.
The greatest element of $\frak{P}[A,B]$ is the partition
$\mathcal{P}_\top=\{A,B\}$ which came from
$\{\underline{a},\underline{b}\}\times
\{\overline{a},\overline{b}\}$. \hfill \small $\blacksquare$}

\begin{definition}
Let $F:\mathcal{I}_{[A,B]}\lra \II(\RR)$ be a bounded interval
function. Given a partition $\mathcal{P}$ of
$\mathcal{I}_{[A,B]}$, we define the following {\bf Riemann sums}
of $F$ w.r.to $\mathcal{P}$ namely:

\begin{itemize}
\item Lower Riemann sum --

$$\sigma (F,\mathcal{P})=\sum_{i=0}^{n-1} \sum_{j=0}^{m-1} \Bigsqcap\; F(\mathcal{I}_{[P_{i,j},P_{i+1,j+1}]}) d_M(P_{i,j},P_{i+1,j+1})
 \mbox{ and}$$

 \item Upper Riemann sum --

$$\sum (F,\mathcal{P})=\sum_{i=0}^{n-1} \sum_{j=0}^{m-1} \bigsqcup\; F(\mathcal{I}_{[P_{i,j},P_{i+1,j+1}]}) d_M(P_{i,j},P_{i+1,j+1})
 $$
\end{itemize}

\noindent where $P_{i,j}=[x_i,y_j]$ and, as usual,
$F(\mathcal{I}_{[P_{i,j},P_{i+1,j+1}]})= \left\{F(X) \;\mid\;
X\in\mathcal{I}_{[P_{i,j},P_{i+1,j+1}]}\right\}$.
\end{definition}

\begin{lemma} \label{lemma-riemman1}
Let $F:\mathcal{I}_{[A,B]}\lra \II(\RR)$ be a bounded function and
let $\mathcal{P}$ be a partition of $\mathcal{I}_{[A,B]}$. Then
$$\sigma(F,\mathcal{P}) \leq \sum (F,\mathcal{P}).$$
\end{lemma}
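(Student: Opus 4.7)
The plan is to reduce the inequality to a termwise comparison and then appeal to the order-compatibility of interval addition and non-negative scalar multiplication. Since both sums have exactly the same index set and the same multiplicative factors $d_M(P_{i,j},P_{i+1,j+1})$, the essential content is the pointwise comparison
$$\Bigsqcap F(\mathcal{I}_{[P_{i,j},P_{i+1,j+1}]}) \leq \bigsqcup F(\mathcal{I}_{[P_{i,j},P_{i+1,j+1}]})$$
for each pair $(i,j)$. First I would observe that because $F$ is bounded on $\mathcal{I}_{[A,B]}$, its restriction to each subregion $\mathcal{I}_{[P_{i,j},P_{i+1,j+1}]}$ is also bounded, so by the Corollary following Lemma \ref{lemma_comp} both the infimum and the supremum exist and are given explicitly by
$$\Bigsqcap F(\mathcal{I}_{[P_{i,j},P_{i+1,j+1}]}) = \left[\Bigsqcap \underline{F}, \Bigsqcap \overline{F}\right],\qquad \bigsqcup F(\mathcal{I}_{[P_{i,j},P_{i+1,j+1}]}) = \left[\bigsqcup \underline{F}, \bigsqcup \overline{F}\right],$$
where the sup/inf on the right-hand sides are taken over the corresponding subregion. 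Since in the real numbers $\inf S \leq \sup S$ for any bounded non-empty set, the Kulisch-Miranker inequality between these two intervals holds component-wise.

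Next I would lift this to the weighted terms. The factor $d_M(P_{i,j},P_{i+1,j+1})$ is a non-negative real number, and from the case analysis in the formula characterizing the product (specifically the sub-case with $\underline{b}\geq 0$ stated in the preliminary section), multiplication of an interval by a non-negative scalar $r$ reduces to $r\cdot[\underline{c},\overline{c}] = [r\underline{c},r\overline{c}]$. In particular, if $C \leq D$ in the Kulisch-Miranker sense, then $rC \leq rD$. Applying this with $C=\Bigsqcap F(\mathcal{I}_{[P_{i,j},P_{i+1,j+1}]})$, $D=\bigsqcup F(\mathcal{I}_{[P_{i,j},P_{i+1,j+1}]})$ and $r=d_M(P_{i,j},P_{i+1,j+1})$ yields the desired termwise inequality between the summands of $\sigma(F,\mathcal{P})$ and $\sum(F,\mathcal{P})$.

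Finally, I would sum these termwise inequalities. The preliminary section shows $A+B=[\underline{a}+\underline{b},\overline{a}+\overline{b}]$, so interval addition is just componentwise real addition on the endpoints. Monotonicity of real addition therefore immediately gives that $\leq$ on $\II(\RR)$ is compatible with $+$: if $C_k \leq D_k$ for all $k$, then $\sum_k C_k \leq \sum_k D_k$. Iterating this over the double sum indexed by $(i,j)$ produces $\sigma(F,\mathcal{P}) \leq \sum(F,\mathcal{P})$.

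There is no real obstacle here; the only point to be careful about is ensuring that the scalar weights are non-negative so that scalar multiplication preserves the Kulisch-Miranker order — which is immediate because $d_M$ is a metric and therefore takes non-negative values. Everything else is a direct invocation of results already established in the paper (the Corollary of the Completeness Lemma for the existence and explicit form of $\bigsqcup$ and $\bigsqcap$, and the endpoint characterizations of $+$ and scalar $\cdot$ for monotonicity).
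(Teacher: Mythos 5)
Your proposal is correct and follows essentially the same route as the paper's proof: a termwise comparison $\Bigsqcap F(\mathcal{I}_{[P_{i,j},P_{i+1,j+1}]}) \leq \bigsqcup F(\mathcal{I}_{[P_{i,j},P_{i+1,j+1}]})$, preserved under multiplication by the non-negative weight $d_M(P_{i,j},P_{i+1,j+1})$ and then under summation. The paper states these steps more tersely (it just asserts the termwise inequality as ``clear''), whereas you supply the supporting justifications via the Corollary of the Completeness Lemma and the endpoint characterizations of $+$ and scalar multiplication; this is a faithful, slightly more detailed version of the same argument.
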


\proof{ Clearly, for each $i=0,\ldots, n-1$ and $j=0,\ldots,m-1$,
$\bigsqcap F(\mathcal{I}_{[P_{i,j},P_{i+1,j+1}]})\leq \bigsqcup
F(\mathcal{I}_{[P_{i,j},P_{i+1,j+1}]})$ and
$d_M(P_{i,j},P_{i+1,j+1})>0$. So,

$$\Bigsqcap\;
F(\mathcal{I}_{[P_{i,j},P_{i+1,j+1}]})d_M(P_{i,j},P_{i+1,j+1})\leq
\bigsqcup
F(\mathcal{I}_{[P_{i,j},P_{i+1,j+1}]})d_M(P_{i,j},P_{i+1,j+1}).$$

Therefore,

$$\sigma(F,\mathcal{P}) \leq \sum (F,\mathcal{P}).$$

\vspace{-.5cm} \hfill \small $\blacksquare$}

\begin{proposition} \label{lemma-riemman2}
Let $F:\mathcal{I}_{[A,B]}\lra \II(\RR)$ be a bounded function and
let $\mathcal{P}'$ and $\mathcal{P}$ be partitions of
$\mathcal{I}_{[A,B]}$. If $\mathcal{P}'\preccurlyeq \mathcal{P}$
then

$$\sigma(F,\mathcal{P})\leq \sigma (F,\mathcal{P}')\leq
\sum (F,\mathcal{P}')\leq \sum (F,\mathcal{P}).$$
\end{proposition}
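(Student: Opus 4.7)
The middle inequality $\sigma(F,\mathcal{P}') \leq \sum(F,\mathcal{P}')$ is an immediate application of Lemma \ref{lemma-riemman1} to the partition $\mathcal{P}'$, so only the two outer inequalities require work. My plan is the standard refinement-step reduction. By Lemma \ref{lemma-part}, $\frak{P}[A,B]$ is a lattice, and any refinement $\mathcal{P}' \preccurlyeq \mathcal{P}$ can be reached by iteratively inserting one new point at a time into either $\mathcal{T}_1$ or $\mathcal{T}_2$. Hence, by induction on the number of insertions, it suffices to establish both outer inequalities in the elementary case where $\mathcal{P}'$ differs from $\mathcal{P}$ by a single added point. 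By the symmetry between the roles of $\bigsqcap$ and $\bigsqcup$, it will then be enough to carry out one of the two elementary arguments carefully and mirror it for the other.

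For such an elementary refinement, I would decompose each Riemann sum into an invariant part, over the sub-rectangles $\mathcal{I}_{[P_{i,j},P_{i+1,j+1}]}$ untouched by the new point, and a variable part, over those sub-rectangles that are split by it into two pieces $\mathcal{I}_{[P_{i,j},R]}$ and $\mathcal{I}_{[R',P_{i+1,j+1}]}$. For each such split rectangle, both pieces are subsets of the original, so by the Completeness Lemma (Lemma \ref{lemma_comp}) and the monotonicity of $\bigsqcap$ and $\bigsqcup$ under set inclusion, the interval $\bigsqcap F$ on each piece dominates $\bigsqcap F$ on the whole rectangle (in the Kulisch--Miranker order), while $\bigsqcup F$ on each piece is dominated by $\bigsqcup F$ on the whole. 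Since multiplication by a positive real preserves the Kulisch--Miranker order on intervals, this is exactly the right ingredient for the lower sum to grow and the upper sum to shrink, provided the weights $d_M$ combine correctly.

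The main obstacle lies precisely in the weights. In classical 1D Riemann theory, sub-interval lengths sum exactly to the parent length, but here a split of $\mathcal{I}_{[P_{i,j},P_{i+1,j+1}]}$ in the first coordinate produces new weights $\max(x^\ast - x_i,\, y_{j+1}-y_j)$ and $\max(x_{i+1} - x^\ast,\, y_{j+1}-y_j)$, whose sum may strictly exceed $d_M(P_{i,j},P_{i+1,j+1}) = \max(x_{i+1}-x_i,\,y_{j+1}-y_j)$; an analogous phenomenon occurs for splits in the second coordinate, with the additional subtlety that a newly inserted point $(x^\ast,y_j)$ must satisfy $x^\ast \leq y_j$ to belong to $\II(\RR)$, so some of the ``expected'' sub-rectangles may fail to exist. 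The elementary step is therefore not a mere redistribution of weights: one has to balance the growth of the total weight against the concomitant contraction of $\bigsqcup F$ (respectively the expansion of $\bigsqcap F$) on the sub-pieces. I expect the argument will require a case analysis on which of the two coordinates realizes the maximum in $d_M$, together with an estimate ensuring that the excess weight produced by the split is absorbed by the image contraction of $F$ on the smaller rectangle.
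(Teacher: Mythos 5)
Your outline up to the obstacle is exactly the paper's own route: reduce to the insertion of a single point $z$ into $\mathcal{T}_1$, then use $\mathcal{I}_{[P_{i-1,j},Q_j]}\subseteq\mathcal{I}_{[P_{i-1,j},P_{i,j+1}]}$ and the monotonicity of $\bigsqcap$ and $\bigsqcup$ under inclusion (via Lemma \ref{lemma_comp}) to compare the integrands on the split pieces with the integrand on the parent. The genuine gap is the step you flag but leave open, and your proposed repair cannot work. You suggest that the excess weight created by the split is ``absorbed by the image contraction of $F$ on the smaller rectangle''; but a constant function exhibits no contraction whatsoever while the weight still grows. Concretely, take $F\equiv[c,c]$ with $c>0$, $A=[0,100]$, $B=[1,110]$, $\mathcal{P}$ the trivial partition and $\mathcal{P}'$ obtained by inserting $1/2$ into $\mathcal{T}_1=\{0,1\}$: every $\bigsqcup F(\cdot)$ equals $[c,c]$, while the weight for $\mathcal{P}$ is $d_M(A,B)=\max(1,10)=10$ and the weights for $\mathcal{P}'$ are $\max(1/2,10)+\max(1/2,10)=20$, so $\sum(F,\mathcal{P}')=[20c,20c]\not\leq[10c,10c]=\sum(F,\mathcal{P})$. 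Hence no estimate of the kind you are hoping for exists, and in fact the third inequality of the proposition fails for arbitrary grid partitions (and $F\equiv[-c,-c]$ defeats the first). Your instinct that this is where the difficulty lies is correct --- the paper's displayed chain stops one line short of $\sigma(F,\mathcal{P})$ at precisely this point --- but the difficulty is not surmountable by a finer analysis of $F$.

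What actually closes the elementary step is a hypothesis on the partitions, not on $F$: one needs the weights to telescope, $d_M(P_{i-1,j},Q_j)+d_M(Q_{j+1},P_{i,j+1})=d_M(P_{i-1,j},P_{i,j+1})$, which holds when the same coordinate realizes every $d_M$ in the sum --- for instance for partitions whose points lie on the segment $\overline{AB}$, which is exactly the setting of Corollary \ref{coro-part2} and of the equispaced partitions $\mathcal{P}_n$ used in the characterization theorem. Under that restriction your argument goes through with no case analysis at all: replace $\bigsqcap F$ on each piece by the smaller $\bigsqcap F$ on the parent (legitimate since the weights are nonnegative), then recombine the weights exactly; dually for $\bigsqcup$. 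So the correct repair is to restrict the admissible partitions (or to replace the weight $d_M$ by one that is additive under splitting), rather than to seek a compensating contraction of the image of $F$.
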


\proof{Let $\mathcal{P}$ be a partition cames from
$\mathcal{T}_1\times \mathcal{T}_2$ where
$\mathcal{T}_1=\{\underline{a}=x_0,\ldots,x_n=\underline{b}\}$.
With no lost of generality we may assume that the partition
$\mathcal{P}'$ cames from $\mathcal{T}'_1\times \mathcal{T}_2$
where
$\mathcal{T}'_1=\{\underline{a}=x_0,\ldots,x_{i-1},z,x_{i+1},\ldots,x_n=\underline{b}\}$.

Let $Q_j=[z,y_j]$ then

$$d_M(P_{i-1,j}, P_{i,j+1})\leq d_M(P_{i-1,j},Q_j)+d_M(Q_{j+1}, P_{i,j+1}).$$

On the other hand, we have that

$$\Bigsqcap\; F(\mathcal{I}_{[P_{i-1,j}, P_{i,j+1}]})\leq \Bigsqcap\;
F(\mathcal{I}_{[P_{i-1,j},Q_j]})\mbox{ and }\Bigsqcap\;
F(\mathcal{I}_{[P_{i-1,j}, P_{i,j+1}]})\leq  \Bigsqcap\;
F(\mathcal{I}_{[Q_{j+1}, P_{i,j+1}]}).$$

Therefore, we have that

$$\begin{array}{rcl}
\sigma (F,\mathcal{P}') & = & \Sumat{r=1}{i-1}
\Sumat{s=0}{m-1}\Bigsqcap\; F(\mathcal{I}_{[P_{r-1,s},
P_{r,s+1}]})d_M(P_{r-1,s}, P_{r,s+1})+ \\
& & \Sumat{s=0}{m-1}
\Bigsqcap\; F(\mathcal{I}_{[P_{i-1,s}, Q_{s+1}]})d_M(P_{i-1,s}, Q_{s+1})+ \\
& &  \Sumat{s=0}{m-1} \Bigsqcap\;
F(\mathcal{I}_{[Q_{s},P_{i,s+1}]})d_M(P_{i,s+1}, Q_{s}) + \\
& & 
\Sumat{r=i}{n-1} \Sumat{s=0}{m-1}\Bigsqcap\;
F(\mathcal{I}_{[P_{r,s},
P_{r+1,s+1}]})d_M(P_{r,s}, P_{r+1,s+1}) \\
\\
 & \geq & \Sumat{r=1}{i-1} \Sumat{s=0}{m-1}\Bigsqcap\;
F(\mathcal{I}_{[P_{r-1,s}, P_{r,s+1}]})d_M(P_{r-1,s}, P_{r,s+1})+ \\
& & 
\Sumat{s=0}{m-1} \Bigsqcap\; F(\mathcal{I}_{[P_{i-1,s},
P_{i,s+1}]})d_M(P_{i-1,s}, Q_{s+1})+  \\
& & \Sumat{s=0}{m-1} \Bigsqcap\;
F(\mathcal{I}_{[P_{i-1,s},P_{i,s+1}]})d_M(P_{i,s+1}, Q_{s}) + \\
& & 
\Sumat{r=i}{n-1} \Sumat{s=0}{m-1}\Bigsqcap\;
F(\mathcal{I}_{[P_{r,s},
P_{r+1,s+1}]})d_M(P_{r,s}, P_{r+1,s+1}) \\
\end{array} $$

The second inequality was proved in Lemma ~\ref{lemma-riemman1}
and the third inequality follows by the same token of the first.
\hfill \small $\blacksquare$}

\begin{corollary}\label{coro-riemman3}
Let $F:\mathcal{I}_{[A,B]}\lra \II(\RR)$ be a bounded function and
let $\mathcal{P}$ and $\mathcal{Q}$ be partitions of
$\mathcal{I}_{[A,B]}$. Then,

$$\sigma(F,\mathcal{P})\leq \Sigma(F,\mathcal{Q}).$$
\end{corollary}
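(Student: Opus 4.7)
The plan is to exploit the lattice structure of $\frak{P}[A,B]$ established in Lemma~\ref{lemma-part} and combine Lemma~\ref{lemma-riemman1} with Proposition~\ref{lemma-riemman2} via a common refinement argument. This is the standard interval-valued analogue of the classical proof that every lower Darboux sum is bounded above by every upper Darboux sum, once the monotonicity of Riemann sums under refinement has been established.

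First I would let $\mathcal{R}=\mathcal{P}\sqcap \mathcal{Q}$ denote the infimum of $\mathcal{P}$ and $\mathcal{Q}$ in the lattice $\prt{\frak{P}[A,B],\preccurlyeq}$. By Lemma~\ref{lemma-part}, if $\mathcal{P}$ and $\mathcal{Q}$ come from $\mathcal{T}_1\times\mathcal{T}_2$ and $\mathcal{T}'_1\times\mathcal{T}'_2$ respectively, then $\mathcal{R}$ comes from $(\mathcal{T}_1\cup\mathcal{T}'_1)\times(\mathcal{T}_2\cup\mathcal{T}'_2)$, so $\mathcal{R}$ is a genuine partition containing both $\mathcal{P}$ and $\mathcal{Q}$ as subsets. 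By the definition of $\preccurlyeq$ this gives simultaneously $\mathcal{R}\preccurlyeq \mathcal{P}$ and $\mathcal{R}\preccurlyeq \mathcal{Q}$.

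Next I would invoke Proposition~\ref{lemma-riemman2} twice with $\mathcal{P}'$ replaced by $\mathcal{R}$. The left-hand inequality of that proposition gives $\sigma(F,\mathcal{P})\leq \sigma(F,\mathcal{R})$, while the right-hand inequality applied with the roles of $\mathcal{Q}$ gives $\sum(F,\mathcal{R})\leq \sum(F,\mathcal{Q})$. Then Lemma~\ref{lemma-riemman1} applied to the partition $\mathcal{R}$ itself yields the middle link $\sigma(F,\mathcal{R})\leq \sum(F,\mathcal{R})$. Chaining the three inequalities produces
$$\sigma(F,\mathcal{P})\leq \sigma(F,\mathcal{R})\leq \sum(F,\mathcal{R})\leq \sum(F,\mathcal{Q}),$$
which is the desired conclusion.

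The argument has no real obstacle, since all the hard work has been absorbed into Proposition~\ref{lemma-riemman2}, whose proof handled the subtle interaction between the metric $d_M$ and the interval-valued infima $\bigsqcap F(\mathcal{I}_{[P_{i,j},P_{i+1,j+1}]})$ under a one-point refinement. The only subtlety worth confirming is that Proposition~\ref{lemma-riemman2} does indeed apply to arbitrary refinements and not only to a single added point: this is fine because a common refinement is reached from either $\mathcal{P}$ or $\mathcal{Q}$ by successively inserting finitely many points of $\mathcal{T}'_1\cup\mathcal{T}'_2$ (or $\mathcal{T}_1\cup\mathcal{T}_2$), and the inequalities in the proposition are transitive under this stepwise construction.
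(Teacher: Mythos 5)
Your proof is correct and follows essentially the same route as the paper: both pass to a common refinement of $\mathcal{P}$ and $\mathcal{Q}$ (the paper writes it as $\mathcal{P}\cup\mathcal{Q}$, you as the lattice infimum coming from $(\mathcal{T}_1\cup\mathcal{T}'_1)\times(\mathcal{T}_2\cup\mathcal{T}'_2)$, which is in fact the cleaner choice since it is guaranteed to be a partition in the defined sense) and then chain Proposition~\ref{lemma-riemman2} with Lemma~\ref{lemma-riemman1}. Your closing remark that Proposition~\ref{lemma-riemman2}, proved only for a one-point insertion, extends to arbitrary refinements by finite iteration is a worthwhile point that the paper leaves implicit.
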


\proof{ Since the partition $P\cup Q$ refines $P$ and $Q$ we have
that

$$\sigma(F,\mathcal{P})\leq \sigma(F,\mathcal{P\cup Q})\leq \Sigma(F,\mathcal{P\cup
Q})\leq \Sigma(F,\mathcal{Q}).$$

\vspace{-.5cm} \hfill \small $\blacksquare$}

\begin{definition}
Let $F:\mathcal{I}_{[A,B]}\lra \II(\RR)$ be a bounded function. We
define the {\bf lower integral} of $F$ w.r.to $A$ and $B$, denoted
by ${\displaystyle \underline{\int_A^B}F(X)dX}$, by

$${\displaystyle \underline{\int_A^B}
F(X)dX=\bigsqcup_{\mathcal{P}\in\frak{P}[A,B]} \sigma
(F,\mathcal{P})}$$

\noindent  and  the {\bf upper integral} of $F$ w.r.to $A$ and
$B$, denoted by ${\displaystyle \overline{\int_A^B}F(X)dX}$, by

$${\displaystyle
\overline{\int_A^B}F(X)dX=\Bigsqcap_{\mathcal{P}\in\frak{P}[A,B]}
\sum (F,\mathcal{P})}.$$
\end{definition}

\begin{proposition}
Let $F:\mathcal{I}_{[A,B]}\lra \II(\RR)$ be a bounded function
such that   $C\leq F(X)\leq D\;\forall X\in \mathcal{I}_{[A,B]}$.
Then, for any partition $\mathcal{P}$ of $\mathcal{I}_{[A,B]}$ we
have that
$$Cd_M(A,B)\leq \sigma(F,\mathcal{P})\leq
\underline{\int_A^B}F(X)dX\leq \overline{\int_A^B}F(X)dX \leq
\sum(F,\mathcal{P})\leq Dd_M(A,B).$$
\end{proposition}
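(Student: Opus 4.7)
The plan is to establish the five inequalities by handling the three inner ones via the definitions of the upper and lower integrals together with Corollary~\ref{coro-riemman3}, and the two outer ones by pivoting at the coarsest partition $\mathcal{P}_\top=\{A,B\}$ and invoking Proposition~\ref{lemma-riemman2}. As a by-product, these outer bounds together with Lemma~\ref{lemma-riemman1} show that the families $\{\sigma(F,\mathcal{P}')\}$ and $\{\sum(F,\mathcal{P}')\}$ are bounded in $\II(\RR)$, so the supremum and infimum defining the integrals do exist by the Completeness Lemma~\ref{lemma_comp}.

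I would first dispatch the three middle inequalities. Both $\sigma(F,\mathcal{P})\leq\underline{\int_A^B}F(X)dX$ and $\overline{\int_A^B}F(X)dX\leq\sum(F,\mathcal{P})$ are immediate from the definitions of the lower and upper integrals as, respectively, the supremum and the infimum of the corresponding Riemann sums over $\frak{P}[A,B]$. For the central inequality $\underline{\int_A^B}F(X)dX\leq\overline{\int_A^B}F(X)dX$, I would apply Corollary~\ref{coro-riemman3}, which gives $\sigma(F,\mathcal{P}_1)\leq\sum(F,\mathcal{P}_2)$ for every pair of partitions; fixing $\mathcal{P}_2$ and taking the supremum over $\mathcal{P}_1$ produces $\underline{\int_A^B}F(X)dX\leq\sum(F,\mathcal{P}_2)$, and then taking the infimum over $\mathcal{P}_2$ yields the desired bound.

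For the two outer inequalities, I would use that $\mathcal{P}_\top=\{A,B\}$ is the greatest element of $\frak{P}[A,B]$ by Lemma~\ref{lemma-part}, so that every partition satisfies $\mathcal{P}\preccurlyeq\mathcal{P}_\top$. Proposition~\ref{lemma-riemman2} then yields $\sigma(F,\mathcal{P}_\top)\leq\sigma(F,\mathcal{P})$ and $\sum(F,\mathcal{P})\leq\sum(F,\mathcal{P}_\top)$. At $\mathcal{P}_\top$ we have $n=m=1$, and both Riemann sums collapse to single terms, namely $\sigma(F,\mathcal{P}_\top)=\bigsqcap F(\mathcal{I}_{[A,B]})\,d_M(A,B)$ and $\sum(F,\mathcal{P}_\top)=\bigsqcup F(\mathcal{I}_{[A,B]})\,d_M(A,B)$. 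The hypothesis $C\leq F(X)\leq D$ for every $X\in\mathcal{I}_{[A,B]}$ translates to $C\leq\bigsqcap F(\mathcal{I}_{[A,B]})$ and $\bigsqcup F(\mathcal{I}_{[A,B]})\leq D$, and multiplying through by the nonnegative real $d_M(A,B)$ preserves the Kulisch--Miranker order since it scales both endpoints of each interval by the same factor. This closes both ends of the chain.

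The main obstacle I expect is merely bookkeeping: the only observation not explicitly isolated in the preliminaries is that interval--scalar multiplication by a nonnegative real is order preserving in the Kulisch--Miranker sense, which follows at once from the endpoint formulation of interval arithmetic given in Section~2. Everything else assembles cleanly from Proposition~\ref{lemma-riemman2}, Corollary~\ref{coro-riemman3}, and the definitions of the upper and lower integrals, with the strategic move of evaluating at $\mathcal{P}_\top$ being the only idea beyond routine manipulation.
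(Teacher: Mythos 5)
Your proposal is correct and follows essentially the same route as the paper: pivot the outer bounds at the trivial partition $\mathcal{P}_\top=\{A,B\}$ via Proposition~\ref{lemma-riemman2}, get the middle inequality from Corollary~\ref{coro-riemman3}, and read off the remaining two from the definitions of the lower and upper integrals. The extra details you supply (the sup--then--inf argument and the order-preservation of scaling by the nonnegative real $d_M(A,B)$) are exactly the steps the paper leaves implicit.
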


\proof{ Let $\mathcal{P}_{\top}=\{A,B\}$ be the trivial partition
of $\mathcal{I}_{[A,B]}$. Then, by Lemma \ref{lemma-riemman2} we
have that
$\sigma(F,\mathcal{P}_{\top})\leq\sigma(F,\mathcal{P})$. But, by
definition, we have that $\sigma(F,\mathcal{P}_{\top})=\bigsqcap
F(\mathcal{I}_{[A,B]})d_M(A,B)\geq Cd_M(A,B)$, which proves the
first inequality. The last inequality follows by the same token.

The inequality ${\displaystyle \underline{\int_A^B}F(X)dX\leq
\overline{\int_A^B}F(X)dX }$ follows by Corollary
~\ref{coro-riemman3}. The remaining inequalities follows by
definition of the lower and upper integrals. \hfill \small
$\blacksquare$}

\begin{proposition}
Let $\frak{P'}$ and $\frak{P''}$ be subset of $\frak{P}[A,B]$
satisfying the following properties:

$$\begin{array}{ll}
(\bullet) & \forall \mathcal{P}\in \frak{P}[A,B]\; \exists
\mathcal{P'}\in \frak{P'}  \mbox{ and }\mathcal{P''}\in
\frak{P''}\mbox{ such that }\sigma(F,\mathcal{P})\leq
\sigma(F,\mathcal{P'}) \mbox{ and } \\
& \sum (F,\mathcal{P''})\leq \sum(F,\mathcal{P}). \end{array}$$


Then,

$$\underline{\int_A^B}
F(X)dX=\bigsqcup_{\mathcal{P'}\in\frak{P'}} \sigma
(F,\mathcal{P'})$$

\noindent and

$$\overline{\int_A^B}
F(X)dX=\Bigsqcap_{\mathcal{P''}\in\frak{P''}} \sum
(F,\mathcal{P''}).$$
\end{proposition}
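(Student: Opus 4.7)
The plan is to prove each equality by a two-sided squeeze: one direction comes for free from the inclusion $\frak{P'}\subseteq\frak{P}[A,B]$, and the other direction comes directly from the hypothesis $(\bullet)$. Both equalities are parallel, so I would write up the lower integral case in full and then remark that the upper integral case follows by the same token with inequalities reversed.

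For the lower integral, I would first observe that since $\frak{P'}\subseteq \frak{P}[A,B]$, every sum of the form $\sigma(F,\mathcal{P'})$ with $\mathcal{P'}\in\frak{P'}$ appears in the family over which the supremum defining $\underline{\int_A^B} F(X)dX$ is taken. Hence
$$\bigsqcup_{\mathcal{P'}\in\frak{P'}} \sigma(F,\mathcal{P'}) \leq \bigsqcup_{\mathcal{P}\in\frak{P}[A,B]} \sigma(F,\mathcal{P}) = \underline{\int_A^B} F(X)dX.$$
For the reverse inequality, fix any $\mathcal{P}\in\frak{P}[A,B]$. By hypothesis $(\bullet)$ there exists $\mathcal{P'}\in\frak{P'}$ with $\sigma(F,\mathcal{P})\leq \sigma(F,\mathcal{P'})\leq \bigsqcup_{\mathcal{P'}\in\frak{P'}}\sigma(F,\mathcal{P'})$. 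Since this upper bound does not depend on $\mathcal{P}$, taking the supremum over all $\mathcal{P}\in\frak{P}[A,B]$ gives $\underline{\int_A^B}F(X)dX\leq \bigsqcup_{\mathcal{P'}\in\frak{P'}}\sigma(F,\mathcal{P'})$, and the two inequalities together yield the desired equality.

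For the upper integral the argument is completely symmetric: $\frak{P''}\subseteq\frak{P}[A,B]$ immediately gives $\overline{\int_A^B}F(X)dX\leq \bigsqcap_{\mathcal{P''}\in\frak{P''}}\sum(F,\mathcal{P''})$, while hypothesis $(\bullet)$ supplies, for each $\mathcal{P}\in\frak{P}[A,B]$, a $\mathcal{P''}\in\frak{P''}$ with $\bigsqcap_{\mathcal{P''}\in\frak{P''}}\sum(F,\mathcal{P''})\leq \sum(F,\mathcal{P''})\leq \sum(F,\mathcal{P})$, and taking the infimum over $\mathcal{P}$ yields the opposite inequality.

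The only subtle point, and not really an obstacle, is ensuring that the supremum and infimum on the right-hand sides exist in $\II(\RR)$. This is guaranteed because $F$ is bounded: all Riemann sums lie inside a bounded subset of $\II(\RR)$, so the Completeness Lemma (Lemma~\ref{lemma_comp}) applied coordinatewise to $\pi_1$ and $\pi_2$ delivers the required bounds in the Kulisch--Miranker order. Once this is noted, the proof reduces to the elementary property that if every element of one bounded family is dominated by some element of a sub-family (and vice versa), the two suprema (respectively infima) agree.
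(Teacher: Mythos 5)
Your proof is correct and takes essentially the same approach as the paper, which simply states that the result ``follows by general properties of \emph{supremum} and \emph{infimum}''; your write-up is precisely the spelling-out of those properties (the two-sided squeeze), together with a sensible remark on why the suprema and infima exist via boundedness and the Completeness Lemma.
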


\proof{ This follows by general properties of {\it supremum} and
{\it infimum}. \hfill \small $\blacksquare$}

\begin{corollary}\label{coro-part2}
Let $A\ll B$, $C\in \overline{AB}$
and let $\frak{P'}[A,B]$ be the subset of $\frak{P}[A,B]$
consisting of partition containing $C$. Then,

$$\underline{\int_A^B}
F(X)dX=\bigsqcup_{\mathcal{P}\in\frak{P'}[A,B]} \sigma
(F,\mathcal{P})$$

\noindent and

$$\overline{\int_A^B}
F(X)dX=\Bigsqcap_{\mathcal{P}\in\frak{P'}[A,B]} \sum
(F,\mathcal{P}).$$
\end{corollary}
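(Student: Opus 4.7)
The plan is to deduce this corollary directly from the preceding proposition by taking $\frak{P'} = \frak{P''} = \frak{P'}[A,B]$. To apply that proposition, the only thing I need to verify is the property $(\bullet)$: for every partition $\mathcal{P}$ of $\mathcal{I}_{[A,B]}$ I must exhibit some partition in $\frak{P'}[A,B]$ (i.e.\ one that contains the point $C$) whose lower Riemann sum dominates $\sigma(F,\mathcal{P})$ and whose upper Riemann sum is dominated by $\sum(F,\mathcal{P})$. The natural candidate is the refinement obtained by inserting the coordinates of $C$ into the two one-dimensional partitions that generate $\mathcal{P}$.

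More explicitly, let $\mathcal{P}$ come from $\mathcal{T}_1 \times \mathcal{T}_2$ where $\mathcal{T}_1$ partitions $[\underline{a},\underline{b}]$ and $\mathcal{T}_2$ partitions $[\overline{a},\overline{b}]$. Since $C \in \overline{AB}$ we have $\underline{c} \in [\underline{a},\underline{b}]$ and $\overline{c} \in [\overline{a},\overline{b}]$, so I can form $\mathcal{T}_1' = \mathcal{T}_1 \cup \{\underline{c}\}$ and $\mathcal{T}_2' = \mathcal{T}_2 \cup \{\overline{c}\}$, both of which are still valid partitions of the respective real intervals. Let $\mathcal{P}'$ be the partition coming from $\mathcal{T}_1' \times \mathcal{T}_2'$. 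By construction $C = [\underline{c},\overline{c}]$ belongs to $\mathcal{T}_1' \times \mathcal{T}_2'$; the relation $\underline{c} \leq \overline{c}$ (valid because $C \in \II(\RR)$) ensures $C$ survives the intersection with $\II(\RR)$, hence $C \in \mathcal{P}'$ and therefore $\mathcal{P}' \in \frak{P'}[A,B]$.

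Clearly $\mathcal{P} \subseteq \mathcal{P}'$, so $\mathcal{P}' \preccurlyeq \mathcal{P}$ in the refinement order. Proposition \ref{lemma-riemman2} then yields
$$\sigma(F,\mathcal{P}) \leq \sigma(F,\mathcal{P}') \quad\text{and}\quad \sum(F,\mathcal{P}') \leq \sum(F,\mathcal{P}),$$
which is exactly property $(\bullet)$ with $\mathcal{P}'' = \mathcal{P}'$. Invoking the previous proposition with $\frak{P'} = \frak{P''} = \frak{P'}[A,B]$ delivers the two identities in the statement. There is no real obstacle here; the only subtle point worth double-checking is that the inserted coordinates $\underline{c}$ and $\overline{c}$ genuinely produce a partition (strictly increasing sequence), which is automatic since duplicates are simply absorbed and the membership $C \in \overline{AB}$ guarantees both coordinates lie in the proper closed intervals.
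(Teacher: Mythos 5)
Your proposal is correct and follows essentially the same route as the paper: both verify condition $(\bullet)$ of the preceding proposition by refining an arbitrary partition $\mathcal{P}$ to one containing $C$ and then invoking Proposition \ref{lemma-riemman2}. The only difference is that you carry out the refinement more carefully (inserting $\underline{c}$ and $\overline{c}$ into the generating one-dimensional partitions), whereas the paper writes the refinement loosely as $\mathcal{P}\cup\{C\}$, which is not literally of the form $(\mathcal{T}_1\times\mathcal{T}_2)\cap\II(\RR)$; your version fixes that minor imprecision.
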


\proof{ From a partition $\mathcal{P}\in\frak{P}[A,B]$ build the
partition $\mathcal{P'}=\mathcal{P}\cup \{C\}$, containing $C$.
Since $\mathcal{P'}$ is finer than $\mathcal{P}$ we have that
$\sigma(F,\mathcal{P})\leq\sigma(F,\mathcal{P'})$ and
$\sum(F,\mathcal{P'})\leq\sum (F,\mathcal{P})$. Thus,
$\frak{P'}[A,B]$ satisfies the condition $(\bullet)$ of the above
proposition. \hfill \small $\blacksquare$}

\begin{corollary}
Let $\mathcal{P}_n=\{A=X_0,X_1,\ldots, X_n=B\}$ be the partition
of $\mathcal{I}_{[A,B]}$ coming from the partitions
$\mathcal{T}=\{\underline{a}=t_0,t_1,\ldots,t_n=\underline{b}\}$ and $\mathcal{S}=\{\overline{a}=s_0,s_1,\ldots,s_n=\overline{b}\}$
of $[\underline{a}, \underline{b}]$ and $[\overline{a}, \overline{b}]$, where
$t_k=\underline{a}+\frac{\displaystyle k}{\displaystyle
n}(\underline{b}-\underline{a})$ and $s_k=\overline{a}+\frac{\displaystyle k}{\displaystyle
n}(\overline{b}-\overline{a})$, respectively. Then,

$$\underline{\int_A^B}
F(X)dX=\bigsqcup_{n\in \NN} \sigma (F,\mathcal{P}_n)$$

\noindent and

$$\overline{\int_A^B} F(X)dX= \Bigsqcap_{n\in \NN} \sum
(F,\mathcal{P}_n).$$
\end{corollary}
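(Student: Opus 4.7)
The plan is to invoke the preceding Proposition with the family $\frak{P}'=\frak{P}''=\{\mathcal{P}_n : n\in\NN\}$. Since every $\mathcal{P}_n$ is a particular element of $\frak{P}[A,B]$, one direction of each equality is immediate from the definitions of the lower and upper integrals: $\bigsqcup_n\sigma(F,\mathcal{P}_n)\leq\underline{\int_A^B}F(X)\,dX$ and $\overline{\int_A^B}F(X)\,dX\leq\Bigsqcap_n\sum(F,\mathcal{P}_n)$. The substance of the proof is to establish the reverse inequalities, which amounts to verifying condition $(\bullet)$ in the form sufficient for identifying the relevant supremum and infimum: for every $\mathcal{P}\in\frak{P}[A,B]$ and every $\varepsilon>0$, some $\mathcal{P}_n$ should satisfy $\sigma(F,\mathcal{P})\leq\sigma(F,\mathcal{P}_n)+\varepsilon$ and $\sum(F,\mathcal{P}_n)\leq\sum(F,\mathcal{P})+\varepsilon$.

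Writing $\mathcal{P}$ as coming from $\mathcal{T}_1\times\mathcal{T}_2$ and letting $\delta>0$ denote the minimum spacing between consecutive points of $\mathcal{T}_1\cup\mathcal{T}_2$, I would take $n$ large enough that the uniform mesh $\max\{(\underline{b}-\underline{a})/n,(\overline{b}-\overline{a})/n\}$ is strictly smaller than $\delta$. Each cell of $\mathcal{P}_n$ then falls into one of two classes: the \emph{good} cells, which lie entirely within a single cell of $\mathcal{P}$, and the \emph{bad} cells, which cross a grid-line of $\mathcal{P}$. A good cell is contained in an enclosing cell of $\mathcal{P}$, so the image of the smaller set under $F$ is contained in the image of the larger set, giving $\bigsqcap F$ on the small cell at least as large as $\bigsqcap F$ on the enclosing cell; summing, the good-cell contributions to $\sigma(F,\mathcal{P}_n)$ collectively dominate the corresponding parts of $\sigma(F,\mathcal{P})$. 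Dually for $\bigsqcup F$ and the upper sum. The bad cells have cardinality of order $(|\mathcal{T}_1|+|\mathcal{T}_2|)\cdot n$, each carries $d_M$-weight of order $1/n$, and $\bigsqcap F$, $\bigsqcup F$ are bounded by the global bounds of $F$, so their aggregate contribution is controllable as $n$ grows.

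The main obstacle is the two-dimensional book-keeping: precisely estimating the discrepancy between $\sigma(F,\mathcal{P})$ and $\sigma(F,\mathcal{P}_n)$ (and the upper-sum dual) in terms of the number of straddling cells times their vanishing $d_M$-weight, and confirming that the discrepancy can be driven below $\varepsilon$ by taking $n$ sufficiently large. A subtle point is that the straddling occurs along two independent directions and must be counted without double-counting the corner cells that cross grid-lines in both coordinates. Once this asymptotic form of $(\bullet)$ is established, the preceding Proposition yields both equalities of the corollary, with the upper-sum equality obtained by the same argument with $\bigsqcap$ replaced by $\bigsqcup$ and the inequalities reversed.
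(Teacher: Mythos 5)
Your route is genuinely different from the paper's, and the difference matters. The paper's proof is a single line: it asserts that for every $\mathcal{P}\in\frak{P}[A,B]$ there exists $n$ with $\mathcal{P}_n\preccurlyeq\mathcal{P}$, i.e.\ that the uniform partition literally \emph{contains} all the breakpoints of $\mathcal{P}$, so that the exact condition $(\bullet)$ holds by the refinement-monotonicity proposition, and the preceding proposition then gives the two identities. You instead prove an $\varepsilon$-weakened form of $(\bullet)$ by a mesh argument with good and bad cells. That instinct is sound --- the paper's refinement claim is in fact false as stated, since a breakpoint of $\mathcal{T}_1$ sitting at an irrational fraction of $[\underline{a},\underline{b}]$ belongs to no uniform partition $\{t_k\}$ --- but your execution has a gap that prevents the argument from closing. (A minor point first: your $\varepsilon$-form of $(\bullet)$ is not literally the hypothesis of the preceding Proposition, so you must either derive the sup/inf identities directly from it, which is routine, or state and prove the modified proposition.)

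The real gap is the bad-cell arithmetic. You count the straddling cells as order $(|\mathcal{T}_1|+|\mathcal{T}_2|)\cdot n$, each of $d_M$-weight of order $1/n$, with values bounded by the global bound $M$ on $F$; the product is of order $(|\mathcal{T}_1|+|\mathcal{T}_2|)\cdot M$, a quantity \emph{independent of} $n$. So ``controllable as $n$ grows'' simply does not follow from the numbers you wrote down, and the discrepancy cannot be driven below $\varepsilon$. For the estimate to close you need the bad-cell count to be $O(|\mathcal{T}_1|+|\mathcal{T}_2|)$ with no factor of $n$. That is exactly what happens if the Riemann sums are read as one-dimensional sums along the chain $A=X_0,X_1,\ldots,X_n=B$ --- the reading the paper itself adopts in the proof of the Theorem that follows, where $\sigma(F,\mathcal{P}_n)=\sum_{k=0}^{n-1}\bigsqcap F(\mathcal{I}_{[X_k,X_{k+1}]})\,d_M(X_k,X_{k+1})$: each breakpoint of $\mathcal{P}$ then meets at most two cells of $\mathcal{P}_n$, the total bad weight is $O\bigl((|\mathcal{T}_1|+|\mathcal{T}_2|)/n\bigr)$, and your argument goes through. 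Under the literal two-dimensional double-sum definition your count is the correct one, and then the total bad weight genuinely does not vanish (indeed the total weight $\sum_{i,j}d_M(P_{i,j},P_{i+1,j+1})$ of the $n\times n$ uniform grid is of order $n\,d_M(A,B)$, which diverges), so you must commit to the one-dimensional reading before any version of the estimate can work. Once you do, you should also say a word about the good cells: since the weights $d_M$ are only subadditive under subdivision and $\bigsqcap F$ need not be nonnegative, ``collectively dominate'' needs the total weight of the good subcells inside a cell of $\mathcal{P}$ compared against that cell's weight, not just the cell-by-cell inclusion of images.
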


\proof{ Let $\mathcal{P}\in\frak{P}[A,B]$ be a partition of
$\mathcal{I}_{[A,B]}$. Clearly, $\exists n\in \NN$ such that
   $\mathcal{P}_n\preceq \mathcal{P}$. Therefore,
$\sigma(F,\mathcal{P})\leq\sigma(F,\mathcal{P}_n)$ and
$\sum(F,\mathcal{P}_n)\leq\sum (F,\mathcal{P})$. Thus,
$\frak{P}=\{\mathcal{P}_n\mid n\in \NN\}$ satisfies the condition
$(\bullet)$ \hfill \small $\blacksquare$}


%
%
%

%
%

\begin{definition}
A bounded function $F:\mathcal{I}_{[A,B]}\lra \II(\RR)$ is said to
be an {\bf integrable function} if

$${\displaystyle
\underline{\int_A^B}F(X)dX=\overline{\int_A^B}F(X)dX}.$$

 This common value is called the {\bf interval integral} of $F$ w.r.to
$A$ and $B$ and it is denoted by ${\displaystyle \int_A^B
F(X)dX}$.
\end{definition}

\begin{definition}
Let $F:\mathcal{I}_{[A,B]}\lra \II(\RR)$ be a bounded function.
Define the {\bf left and right spectrum} of $F$, denoted by $F_l$
and $F_r$ respectively, by

$$F_l(x)=\pi_1F\left [x,\overline{a}+\frac{\displaystyle
\overline{b}-\overline{a}}{\displaystyle
\underline{b}-\underline{a}}(x-\underline{a})\right ]$$

\noindent and

$$F_r(x)=\pi_2F\left [x,\overline{a}+\frac{\displaystyle
\overline{b}-\overline{a}}{\displaystyle
\underline{b}-\underline{a}}(x-\underline{a})\right ],$$

\noindent where $\pi_1$ and $\pi_2$ are the left and right
projections from $\II(\RR)$ to $\RR$ and $x\in [\underline{a},
\underline{b}]$.
\end{definition}

\begin{theorem}
Let $F:\mathcal{I}_{[A,B]}\lra \II(\RR)$ be a continuous function.
Then,

$$\underline{\int_A^B}F(X)dX=\left
[\underline{\int_{\underline{a}}^{\underline{b}}}F_l(x)dx,
 \underline{\int_{\underline{a}}^{\underline{b}}}F_r(x)dx\right
]\frac{d_M(A,B)}{\underline{b}-\underline{a}}$$

\noindent and

$$\overline{\int_A^B}F(X)dX=\left
[\overline{\int_{\underline{a}}^{\underline{b}}}F_l(x)dx,
 \overline{\int_{\underline{a}}^{\underline{b}}}F_r(x)dx\right
]\frac{d_M(A,B)}{\underline{b}-\underline{a}}.$$
\end{theorem}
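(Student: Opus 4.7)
The plan is to apply the last corollary to replace the supremum/infimum over all partitions in $\frak{P}[A,B]$ by the supremum/infimum over the uniform partitions $\mathcal{P}_n$, and then to recognise each interval Riemann sum $\sigma(F,\mathcal{P}_n)$ and $\Sigma(F,\mathcal{P}_n)$ as an interval whose lower and upper endpoints are ordinary real Darboux sums for $F_l$ and $F_r$ on $[\underline{a},\underline{b}]$.

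First I would verify the geometric ingredients of the uniform partition. Setting $t_k=\underline{a}+\frac{k}{n}(\underline{b}-\underline{a})$, $s_k=\overline{a}+\frac{k}{n}(\overline{b}-\overline{a})$ and $X_k=[t_k,s_k]$, each $X_k$ lies on $\overline{AB}$, the Moore width of every cell $\mathcal{I}_{[X_k,X_{k+1}]}$ equals $\max\{(\underline{b}-\underline{a})/n,(\overline{b}-\overline{a})/n\}=d_M(A,B)/n$, and by definition of the spectra $F(X_k)=[F_l(t_k),F_r(t_k)]$. Combined with the Completeness Lemma (Lemma~\ref{lemma_comp}), which decomposes an interval infimum into the pair of coordinate infima of $\underline{F}$ and $\overline{F}$, I obtain
$$\bigsqcap F(\mathcal{I}_{[X_k,X_{k+1}]})=\left[\inf_{x\in[t_k,t_{k+1}]}F_l(x),\ \inf_{x\in[t_k,t_{k+1}]}F_r(x)\right]$$
and the analogous identity for $\bigsqcup$.

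Next I would use Lemma~\ref{lemma-sup}, which distributes $\bigsqcap$ and $\bigsqcup$ over interval sums, together with the factorization $d_M(A,B)/n=(d_M(A,B)/(\underline{b}-\underline{a}))\cdot(\underline{b}-\underline{a})/n$. This rewrites $\sigma(F,\mathcal{P}_n)$ as
$$\frac{d_M(A,B)}{\underline{b}-\underline{a}}\left[\sum_{k=0}^{n-1}\inf_{[t_k,t_{k+1}]}F_l\cdot\frac{\underline{b}-\underline{a}}{n},\ \sum_{k=0}^{n-1}\inf_{[t_k,t_{k+1}]}F_r\cdot\frac{\underline{b}-\underline{a}}{n}\right],$$
whose two coordinate sums are exactly the lower Darboux sums for $F_l$ and $F_r$ over the uniform partition of $[\underline{a},\underline{b}]$ of mesh $(\underline{b}-\underline{a})/n$. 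Taking the supremum over $n\in\NN$, applying the last corollary on the left and the classical Darboux definition of the lower Riemann integral on the right, yields the claimed identity for $\underline{\int_A^B}F(X)dX$. The upper case is entirely symmetric, replacing infima by suprema, $\sigma$ by $\Sigma$ and $\bigsqcup_n$ by $\Bigsqcap_n$.

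The main obstacle will be clarifying the two-dimensional structure of $\mathcal{P}_n$. The general partition definition allows $(\mathcal{T}_1\times\mathcal{T}_2)\cap\II(\RR)$ to produce cells $[P_{i,j},P_{i+1,j+1}]$ with $i\neq j$, while the last corollary labels only the diagonal nodes $X_0,\ldots,X_n$; I would argue, using the monotonicity of $\sigma$ and $\Sigma$ from Proposition~\ref{lemma-riemman2} together with the uniform cell-width computation above, that any off-diagonal contribution is absorbed into the diagonal ones upon refinement, so the sup/inf over $n$ coincides with that taken on the diagonal subpartition alone. A secondary technical point is the Riemann integrability of $F_l$ and $F_r$ on $[\underline{a},\underline{b}]$, which follows from the continuity of $F$ composed with the continuous affine parameterization $x\mapsto\left[x,\overline{a}+\frac{\overline{b}-\overline{a}}{\underline{b}-\underline{a}}(x-\underline{a})\right]$ of $\overline{AB}$.
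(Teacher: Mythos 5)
Your overall strategy (pass to the uniform partitions $\mathcal{P}_n$ via the last corollary, then read off the coordinates of the interval Riemann sums as real Riemann sums of $F_l$ and $F_r$) is the same as the paper's, but the central step of your argument has a genuine gap: the asserted identity
$$\Bigsqcap\; F(\mathcal{I}_{[X_k,X_{k+1}]})=\left[\inf_{x\in[t_k,t_{k+1}]}F_l(x),\ \inf_{x\in[t_k,t_{k+1}]}F_r(x)\right]$$
is false in general. The Completeness Lemma only gives that the left-hand side equals $\bigl[\inf \underline{F}(\mathcal{I}_{[X_k,X_{k+1}]}),\ \inf\overline{F}(\mathcal{I}_{[X_k,X_{k+1}]})\bigr]$, where the infima run over the \emph{entire two-dimensional} order cell $\mathcal{I}_{[X_k,X_{k+1}]}=\{X\mid X_k\leq X\leq X_{k+1}\}$, whereas $\inf_{[t_k,t_{k+1}]}F_l$ and $\inf_{[t_k,t_{k+1}]}F_r$ sample $F$ only along the one-dimensional diagonal segment of $\overline{AB}$ inside that cell. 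For example, for the continuous function $F([x,y])=[-(y-\ell(x))^2,\,0]$ with $\ell(x)=\overline{a}+\frac{\overline{b}-\overline{a}}{\underline{b}-\underline{a}}(x-\underline{a})$ one has $F_l\equiv 0$, while $\Bigsqcap\;\underline{F}(\mathcal{I}_{[X_k,X_{k+1}]})\leq -(s_{k+1}-s_k)^2<0$ for every $n$ (take $X=[t_k,s_{k+1}]$ in the cell). So for fixed $n$ your coordinate sums are \emph{not} the lower Darboux sums of $F_l,F_r$; only the inequality $\leq$ (and $\geq$ for the upper sums) is available.

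What rescues the theorem, and what the paper actually proves, is that the discrepancy vanishes in the limit: by compactness of $\mathcal{I}_{[X_k,X_{k+1}]}$ the infimum over the cell is attained at some $Y_k$, and since $\underline{F}$ is uniformly continuous on the compact set $\mathcal{I}_{[A,B]}$ and $d_M(X_k,Y_k)\leq d_M(A,B)/n$, one gets $0\leq \underline{F}(X_k)-\Bigsqcap\;\underline{F}(\mathcal{I}_{[X_k,X_{k+1}]})\leq\epsilon$ for all $n$ large, so the averaged sums $\frac{1}{n}\sum_k$ differ by at most $\epsilon$ and their suprema over $n$ coincide. This uniform-continuity estimate is the heart of the proof and is precisely the step your proposal replaces with a false pointwise equality; note also that the paper compares the cell infimum with the corner value $\underline{F}(X_k)=F_l(t_k)$ (a tagged Riemann sum), not with $\inf_{[t_k,t_{k+1}]}F_l$. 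Your closing concern about off-diagonal cells of the grid $\mathcal{T}\times\mathcal{S}$ points at a real notational looseness in the paper, but it is orthogonal to this gap: the paper simply reads $\sigma(F,\mathcal{P}_n)$ as the single sum over the diagonal cells $\mathcal{I}_{[X_k,X_{k+1}]}$.
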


\proof{ We will only prove the first equality since the second one
follows analogously.

 Let $\mathcal{P}_n=\{A=X_0,X_1,\ldots, X_n=B\}$ be
the partition of $\mathcal{I}_{[A,B]}$ coming from the partitions
$\mathcal{T}=\{\underline{a}=t_0,t_1,\ldots,t_n=\underline{b}\}$ and $\mathcal{S}=\{\overline{a}=s_0,s_1,\ldots,s_n=\overline{b}\}$
of $[\underline{a}, \underline{b}]$ and $[\overline{a}, \overline{b}]$, where
$t_k=\underline{a}+\frac{\displaystyle k}{\displaystyle
n}(\underline{b}-\underline{a})$ and $s_k=\overline{a}+\frac{\displaystyle k}{\displaystyle
n}(\overline{b}-\overline{a})$, respectively.

of $[\underline{a}, \underline{b}]$, where
$t_k=\underline{a}+\frac{\displaystyle k}{\displaystyle
n}(\underline{b}-\underline{a}).$ Then,

$$d_M(X_k, X_{k+1})=\frac{\displaystyle d_M(A,B)}{\displaystyle n}.$$

Therefore,

$$\begin{array}{rcl} \underline{\displaystyle \int_A^B} F(X)dX & = &
{\displaystyle  \bigsqcup_{n\in\NN}\sigma(F,\mathcal{P}_n)}\\
                            & = & {\displaystyle \bigsqcup_{n\in\NN} \sum_{k=0}^{n-1}}
                            \Bigsqcap\;
                            F(\mathcal{I}_{[X_k,X_{k+1}]})
                            d_M(X_{k+1},X_k)\\
                            & = & d_M(A,B){\displaystyle \bigsqcup_{n\in\NN}\left (\frac{1}{n}\right )
                            \sum_{k=0}^{n-1}}
                            \Bigsqcap\;
                            F(\mathcal{I}_{[X_k,X_{k+1}]})\\
                            & = & d_M(A,B)\left [{\displaystyle
                            \bigsqcup_{n\in\NN}\left (\frac{1}{n}\right )\sum_{k=0}^{n-1}}
                            \Bigsqcap\;
                            \underline{F}(\mathcal{I}_{[X_k,X_{k+1}]}), {\displaystyle
                            \bigsqcup_{n\in\NN}\left (\frac{1}{n}\right )\sum_{k=0}^{n-1}}
                            \Bigsqcap\;
                            \overline{F}(\mathcal{I}_{[X_k,X_{k+1}]})\right
                            ].

\end{array}$$

On the other side, we have that

$$\begin{array}{rcl}
\underline{\displaystyle
\int_{\underline{a}}^{\underline{b}}}F_l(x)dx & = &
{\displaystyle \bigsqcup_{n\in\NN}\sum_{k=0}^{n-1}}F_l(t_k)d(t_k,t_{k+1}) \\
                                                         & = &
{\displaystyle \bigsqcup_{n\in\NN}\sum_{k=0}^{n-1}} F_l(t_k)\left
({\displaystyle \frac{1}{n}} \right )
(\underline{b}-\underline{a}) \\
                                                         & = &
(\underline{b}-\underline{a}){\displaystyle
\bigsqcup_{n\in\NN}\left (\frac{1}{n} \right ) \sum_{k=0}^{n-1}}
\underline{F}(X_k).
\end{array}$$

Analogously we have that

$$\underline{\int_{\underline{a}}^{\underline{b}}}F_r(x)dx=
(\underline{b}-\underline{a}){\displaystyle
\bigsqcup_{n\in\NN}\left (\frac{1}{n} \right ) \sum_{k=0}^{n-1}}
\overline{F}(X_k).$$

Therefore, it is enough to prove that

$${\displaystyle \bigsqcup_{n\in\NN}\left (\frac{1}{n}\right )\sum_{k=0}^{n-1}}
\Bigsqcap\;
\underline{F}(\mathcal{I}_{[X_k,X_{k+1}]})={\displaystyle
\bigsqcup_{n\in\NN}\left (\frac{1}{n}\right ) \sum_{k=0}^{n-1}}
\underline{F}(X_k)$$

\noindent and

$${\displaystyle \bigsqcup_{n\in\NN}\left (\frac{1}{n}\right )\sum_{k=0}^{n-1}}
\Bigsqcap\;
\overline{F}(\mathcal{I}_{[X_k,X_{k+1}]})={\displaystyle
\bigsqcup_{n\in\NN}\left (\frac{1}{n}\right ) \sum_{k=0}^{n-1}}
\overline{F}(X_k).$$

By similarity, we will only prove the former.

Clearly we have that

$$\Bigsqcap\;\underline{F}(\mathcal{I}_{[X_k,X_{k+1}]})\leq \underline{F}(X_k).$$

Thus,

$${\displaystyle \bigsqcup_{n\in\NN}\left (\frac{1}{n}\right )\sum_{k=0}^{n-1}}
\Bigsqcap\; \underline{F}(\mathcal{I}_{[X_k,X_{k+1}]})\leq
{\displaystyle \bigsqcup_{n\in\NN}\left (\frac{1}{n}\right )
\sum_{k=0}^{n-1}} \underline{F}(X_k).$$

Conversely, since $\mathcal{I}_{[X_k,X_{k+1}]}$ is compact and
$\underline{F}$ is a continuous function, we have that $\exists
Y_k\in \mathcal{I}_{[X_k,X_{k+1}]}$ such that $\bigsqcap\;
\underline{F}(\mathcal{I}_{[X_k,X_{k+1}]})=\underline{F}(Y_k)$.

Since $\underline{F}$ is a continuous function and
$\mathcal{I}_{[A,B]}$ is compact we have that $\underline{F}$ is
uniformly continuous.

Let $\epsilon >0$. Since $\underline {F}$ is uniformly continuous,
we have that $\exists \delta >0$ such that if $d_M(X,Y)\leq
\delta$ then $d(\underline{F}(X),\underline{F}(Y))\leq \epsilon$.

Let $n_0\in \NN$ be such that if $n\geq n_0$ then ${\displaystyle
\frac{d_M(A,B)}{n}} \leq \delta$.

Since $Y_k\in \mathcal{I}_{[X_k,X_{k+1}]}$ we have that

$$d_M(X_k,Y_k)\leq d_M(X_k,X_{k+1})=\frac{\displaystyle d_M(A,B)}{\displaystyle n}\leq \delta.$$

Thus, since $\underline {F}$ is uniformly continuous, we have that
$d(\underline{F}(X_k),\underline{F}(Y_k))\leq \epsilon$.
Therefore,

$$\begin{array}{rcl}
{\displaystyle \left (\frac{1}{n}\right
)\sum_{k=0}^{n-1}\underline{F}(X_k)-\left (\frac{1}{n}\right
)\sum_{k=0}^{n-1}} \Bigsqcap\;
\underline{F}(\mathcal{I}_{[X_k,X_{k+1}]}) & = & {\displaystyle
\left (\frac{1}{n}\right )\sum_{k=0}^{n-1}}
[\underline{F}(X_k)-\Bigsqcap\;
\underline{F}(\mathcal{I}_{[X_k,X_{k+1}]}) ] \\
                                           & = &
{\displaystyle \left (\frac{1}{n}\right )\sum_{k=0}^{n-1}}
[\underline{F}(X_k)-\underline{F}(Y_k) ]
\\
& \leq & {\displaystyle \left (\frac{1}{n}\right )\sum_{k=0}^{n-1}} \epsilon \\
& = & \epsilon ,
\end{array}$$

\noindent which proves that

$${\displaystyle \bigsqcup_{n\in\NN}\left (\frac{1}{n}\right )\sum_{k=0}^{n-1}}
\Bigsqcap\;
\underline{F}(\mathcal{I}_{[X_k,X_{k+1}]})={\displaystyle
\bigsqcup_{n\in\NN}\left (\frac{1}{n}\right )
\sum_{k=0}^{n-1}}\underline{F}(X_k).$$

\vspace{-.5cm} \hfill \small $\blacksquare$}

\begin{corollary} \label{coro-charac-gen} [characterization
theorem] If $F:\mathcal{I}_{[A,B]}\lra \II(\RR)$ is a continuous
interval function then, $F$ is an integrable function and

\begin{equation}\label{eq-charac-gen}
\int_A^BF(X)dX=\left
[\int_{\underline{a}}^{\underline{b}}F_l(x)dx,
\int_{\underline{a}}^{\underline{b}}F_r(x)dx\right
]\frac{d_M(A,B)}{\underline{b}-\underline{a}}.
\end{equation}
\end{corollary}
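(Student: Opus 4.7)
The plan is to reduce the corollary directly to the preceding theorem by observing that once we know the classical Riemann integrals of $F_l$ and $F_r$ exist, the two factors on the right-hand sides of the theorem's formulas collapse to a single interval. First I would check that $F_l$ and $F_r$ are continuous real-valued functions on $[\underline{a},\underline{b}]$. The parametrization $x\mapsto\left[x,\overline{a}+\frac{\overline{b}-\overline{a}}{\underline{b}-\underline{a}}(x-\underline{a})\right]$ is continuous from $[\underline{a},\underline{b}]$ into $(\II(\RR),d_M)$ (it is in fact Lipschitz, since both endpoints are affine in $x$). Composing with the assumed continuous $F$ and then with the $1$-Lipschitz projections $\pi_1,\pi_2:\II(\RR)\to\RR$ yields that $F_l$ and $F_r$ are continuous on the compact interval $[\underline{a},\underline{b}]$.

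Next I would invoke the classical Riemann theory: every continuous function on a compact real interval is Riemann integrable, so
\begin{equation*}
\underline{\int_{\underline{a}}^{\underline{b}}} F_l(x)\,dx
= \overline{\int_{\underline{a}}^{\underline{b}}} F_l(x)\,dx
= \int_{\underline{a}}^{\underline{b}} F_l(x)\,dx,
\end{equation*}
and similarly for $F_r$. Plugging these equalities into the two displayed formulas provided by the preceding theorem, the lower and upper interval integrals of $F$ become the same interval, namely the right-hand side of (\ref{eq-charac-gen}). By the definition of interval integrability, $F$ is integrable and its integral has the claimed value.

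The main (minor) obstacle is purely notational: justifying that the factor $\frac{d_M(A,B)}{\underline{b}-\underline{a}}$ can be factored out of the bracket in a legitimate way. Since $A\ll B$ forces $\underline{b}-\underline{a}>0$ and $d_M(A,B)\geq 0$, the scalar $\frac{d_M(A,B)}{\underline{b}-\underline{a}}$ is a nonnegative real number, so its product with an interval $[u,v]$ (with $u\leq v$) is simply $\left[\tfrac{d_M(A,B)}{\underline{b}-\underline{a}}u,\tfrac{d_M(A,B)}{\underline{b}-\underline{a}}v\right]$, which is exactly the interval obtained by the formulas of the theorem on both the lower and the upper side. Hence no additional algebraic manipulation is needed, and the identification of the two interval integrals is immediate from the scalar identification of the ordinary Riemann integrals.
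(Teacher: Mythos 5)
Your proposal is correct and follows essentially the same route as the paper: deduce continuity of $F_l$ and $F_r$ from that of $F$, use classical Riemann theory to identify their lower and upper integrals, and conclude via the preceding theorem that the lower and upper interval integrals coincide. Your additional remarks (the Lipschitz parametrization and the factoring of the nonnegative scalar $\frac{d_M(A,B)}{\underline{b}-\underline{a}}$) merely make explicit details the paper leaves implicit.
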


\proof{ Since $F$ is continuous, we have that $F_l$ and $F_r$ are
continuous as well. Therefore,

$$\underline{\int_{\underline{a}}^{\underline{b}}}F_l(x)dx=\overline{\int_{\underline{a}}^{\underline{b}}}F_l(x)dx$$

\noindent and

$$\underline{\int_{\underline{a}}^{\underline{b}}}F_r(x)dx=\underline{\int_{\underline{a}}^{\underline{b}}}F_r(x)dx.$$

Therefore, by the above proposition, we have that

$$\underline{\int_A^B} F(X)dX=\overline{\int_A^B} F(X)dX.$$

Thus, $F$ is an integrable function and

$$\int_A^BF(X)dX=\left
[\int_{\underline{a}}^{\underline{b}}F_l(x)dx,
\int_{\underline{a}}^{\underline{b}}F_r(x)dx\right
]\frac{d_M(A,B)}{\underline{b}-\underline{a}}.$$

\vspace{-.5cm} \hfill \small $\blacksquare$}

\begin{corollary}
If $A=[a,a]$ and $B=[b,b]$ with $a< b$ then

$$\int_A^B F(X)dX = \int_{[a,b]} F(X)dX.$$
\end{corollary}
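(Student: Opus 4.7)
The plan is to apply the two characterization theorems at the heart of the paper---Corollary \ref{coro-charac-gen} for the new integral $\int_A^B$ and Theorem \ref{teo-moo-char} for the Moore--Yang integral $\int_{[a,b]}$---and observe that under the degenerate limits $A=[a,a]$, $B=[b,b]$ they reduce to the same pair of scalar Riemann integrals.

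First I would check that the hypotheses needed by both sides are available: since $a<b$, we have $[a,a]\ll[b,b]$, so $\mathcal{I}_{[A,B]}$ is well-defined and Corollary \ref{coro-charac-gen} applies; continuity of $F$ (plus inclusion monotonicity, as required by the Moore--Yang side) is inherited from the ambient assumption. Next I would simplify the scale factor appearing in \eqref{eq-charac-gen}: with $\underline{a}=\overline{a}=a$ and $\underline{b}=\overline{b}=b$ one obtains $d_M(A,B)=\max\{b-a,b-a\}=b-a=\underline{b}-\underline{a}$, so the ratio $d_M(A,B)/(\underline{b}-\underline{a})$ collapses to $1$ and contributes nothing.

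The key step is to observe that the left and right spectra degenerate to the diagonal. Substituting the extremes into the definition of $F_l$,
$$F_l(x)=\pi_1 F\left[x,\; a+\tfrac{b-a}{b-a}(x-a)\right]=\pi_1 F[x,x],$$
which is precisely the function $f_l$ from Theorem \ref{teo-moo-char}; the identical manipulation gives $F_r(x)=\pi_2 F[x,x]=f_r(x)$. Plugging these into \eqref{eq-charac-gen} would then yield
$$\int_A^B F(X)\,dX=\left[\int_a^b f_l(x)\,dx,\; \int_a^b f_r(x)\,dx\right],$$
and Theorem \ref{teo-moo-char} identifies the right-hand side with $\int_{[a,b]} F(X)\,dX$, closing the argument.

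I do not anticipate a serious obstacle; this corollary is essentially a bookkeeping check that the two characterization theorems agree when the integration limits are degenerate. The only delicate point is noticing that the parametric curve $\overline{AB}$, along which $F_l$ and $F_r$ are evaluated, collapses onto the diagonal $\{[x,x]:x\in[a,b]\}$ exactly when the scale factor $d_M(A,B)/(\underline{b}-\underline{a})$ becomes $1$; once this alignment is observed, the proof is a one-line comparison of the two characterizations.
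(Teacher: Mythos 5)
Your proposal is correct and follows exactly the paper's own argument: both observe that for degenerate limits the spectra $F_l,F_r$ reduce to the diagonal functions $f_l,f_r$ of Theorem \ref{teo-moo-char} and that the factor $d_M(A,B)/(\underline{b}-\underline{a})$ equals $1$, so the two characterization formulas coincide. Your write-up merely spells out the substitutions that the paper leaves implicit.
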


\proof{ Notice that when $A$ and $B$ are degenerated intervals,
$F_l=f_l$, $F_r=f_r$ and ${\displaystyle
\frac{d_M(A,B)}{\underline{b}-\underline{a}}}=1$. So, this
corollary is straightforward from Corollary ~\ref{coro-charac-gen}
and Theorem ~\ref{teo-moo-char}. \hfill \small $\blacksquare$}

Therefore our approach generalize the Moore and Yang approach.

\begin{proposition}
For fixed $A\in \II(\RR)$ and an interval continuous function $F$,
The function $G:\mathcal{I}_A\lra \II(\RR)$ defined by

$$G(Y)=\int_A^Y F(X) dX,$$

\noindent where $\mathcal{I}_A=\{Y\in\II(\RR)\;\mid\; A\leq Y\}$,
is continuous.
\end{proposition}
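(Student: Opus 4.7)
The plan is to apply the characterization in Corollary~\ref{coro-charac-gen} to express $G(Y)$ explicitly, and then reduce the continuity of $G$ to the convergence of two ordinary real Riemann integrals whose integrands and limits both depend on $Y$. The key trick will be to reparametrize the integration interval by a variable running in $[0,1]$, so that the limits of integration become independent of $Y$ and the $Y$-dependence is confined to the integrand; uniform continuity of $F$ on a suitable compact neighborhood will then permit passing to the limit. We treat the interior case $A\ll Y$ first, and handle the boundary $Y=A$ separately.

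Fix $Y_0\in\mathcal{I}_A$ with $A\ll Y_0$ and choose $Y_1\in\II(\RR)$ with $Y_0\ll Y_1$, so that in a small $d_M$-ball around $Y_0$ every $Y$ satisfies $A\ll Y\leq Y_1$. For such $Y$, Corollary~\ref{coro-charac-gen} yields
$$G(Y)=\left[\int_{\underline{a}}^{\underline{y}} F_l^Y(x)\,dx,\; \int_{\underline{a}}^{\underline{y}} F_r^Y(x)\,dx\right]\frac{d_M(A,Y)}{\underline{y}-\underline{a}}.$$
The substitution $x=\underline{a}+t(\underline{y}-\underline{a})$ turns the two real integrals into $(\underline{y}-\underline{a})\int_0^1 \underline{F}(H_Y(t))\,dt$ and $(\underline{y}-\underline{a})\int_0^1 \overline{F}(H_Y(t))\,dt$, where
$$H_Y(t)=\bigl[\underline{a}+t(\underline{y}-\underline{a}),\;\overline{a}+t(\overline{y}-\overline{a})\bigr]\in\mathcal{I}_{[A,Y_1]}.$$
The factor $(\underline{y}-\underline{a})$ cancels the denominator, so
$$G(Y)=d_M(A,Y)\left[\int_0^1 \underline{F}(H_Y(t))\,dt,\; \int_0^1 \overline{F}(H_Y(t))\,dt\right].$$

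Continuity of $G$ at $Y_0$ reduces to showing that the right-hand side depends continuously on $Y$. The map $Y\mapsto d_M(A,Y)$ is trivially continuous, and $(Y,t)\mapsto H_Y(t)$ is jointly continuous into the compact set $\mathcal{I}_{[A,Y_1]}$. Because $F$ is continuous, so are $\underline{F}$ and $\overline{F}$; compactness of $\mathcal{I}_{[A,Y_1]}$ (Heine--Borel, as observed before Lemma~\ref{lemma_comp}) upgrades this to uniform continuity. Hence $\underline{F}(H_Y(t))\to\underline{F}(H_{Y_0}(t))$ uniformly in $t\in[0,1]$ as $Y\to Y_0$, and the same holds for $\overline{F}$; uniform convergence on $[0,1]$ transfers to convergence of the Riemann integrals, so each coordinate of $G(Y)$ converges to the corresponding coordinate of $G(Y_0)$, which is precisely $d_M(G(Y),G(Y_0))\to 0$.

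The only point requiring additional care is the boundary case $Y\to A$ in $\mathcal{I}_A$ (where $A\ll Y$ fails and the formula above is not directly applicable). Here we observe that both bracketed integrals are bounded by a constant depending only on $\sup_{Z\in\mathcal{I}_{[A,Y_1]}}|F(Z)|$, while $d_M(A,Y)\to 0$, so $G(Y)\to[0,0]$, consistent with the natural convention $G(A)=[0,0]$. The main technical obstacle throughout is the simultaneous dependence of the integrand and of the endpoints of integration on the parameter $Y$; the reparametrization by $t\in[0,1]$ together with the compactness of the enlarged domain $\mathcal{I}_{[A,Y_1]}$ is exactly what decouples these two dependences and makes the uniform continuity argument go through.
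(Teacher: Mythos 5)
Your proof is correct, and on the one delicate point it is actually more careful than the paper's own argument. The paper factors $\pi_1\circ G$ as $H\cdot(f_l\circ\pi_1)$ with $f_l(\underline{y})=\int_{\underline{a}}^{\underline{y}}F_l(x)\,dx$ and appeals to continuity of an integral in its upper limit; this silently treats $F_l$ as one fixed function of $x$, whereas by definition $F_l(x)=\pi_1 F\bigl[x,\overline{a}+\frac{\overline{y}-\overline{a}}{\underline{y}-\underline{a}}(x-\underline{a})\bigr]$ depends on $Y$ through the slope of the segment $\overline{AY}$, so the integrand and the endpoint vary with $Y$ simultaneously (and $\pi_1\circ G$ depends on $\overline{y}$ as well as $\underline{y}$, so it is not literally of the form $f_l\circ\pi_1$). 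Your substitution $x=\underline{a}+t(\underline{y}-\underline{a})$, which rewrites each coordinate as $d_M(A,Y)\int_0^1\underline{F}(H_Y(t))\,dt$ (resp.\ $\overline{F}$) with the fixed limits $[0,1]$, plus uniform continuity of $\underline{F}$ and $\overline{F}$ on the compact set $\mathcal{I}_{[A,Y_1]}$, is precisely the argument needed to justify the step the paper elides; what it buys is a genuinely rigorous proof at the cost of a page of estimates, while the paper's version buys brevity at the cost of a gap. One shared caveat: the characterization formula, and indeed the definition of $\int_A^Y$, presupposes $A\ll Y$, so points $Y\in\mathcal{I}_A$ with $\underline{y}=\underline{a}$ but $Y\neq A$ are covered by neither your argument nor the paper's (there $H(Y)=\max\{1,\frac{\overline{y}-\overline{a}}{\underline{y}-\underline{a}}\}$ is undefined); your separate treatment of $Y\to A$ with the convention $G(A)=[0,0]$ is sound, but strictly speaking the proposition's domain should be $\{Y\mid A\ll Y\}$ (possibly together with $A$ itself), a defect of the statement rather than of your proof.
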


\proof{ It is enough to show that $\pi_1\circ G$ and $\pi_2\circ
G$ are continuous. Since $F$ is continuous we have that $F_l$ and
$F_r$ are continuous as well. Therefore, the real functions
$f_l(\underline{y})={\displaystyle
\int_{\underline{a}}^{\underline{y}}F_l(x)dx}$ and
$f_r(\underline{y})={\displaystyle
\int_{\underline{a}}^{\underline{y}}F_r(x)dx}$ are continuous. On
the other hand, the function $H:\mathcal{I}_A\lra \RR$ given by
\linebreak $H(Y)=\frac{\displaystyle d_M(A,Y)}{\displaystyle
\underline{y}-\underline{a}}=\max\left \{1, \frac{\displaystyle
\overline{y}-\overline{a}}{\displaystyle
\underline{y}-\underline{a}}\right \}$ is clearly continuous.
Thus, the functions $\pi_1\circ G=H\cdot (f_l\circ \pi_1)$ and
$\pi_2\circ G=H\cdot(f_r\circ \pi_1)$ are continuous as well.
\hfill \small $\blacksquare$}

\begin{proposition}
Let $F_2$ and $F_1$ be integrable functions from
$\mathcal{I}_{[A,B]}$ to $\II(\RR)$. If

$$F_2\left (\left [x,\overline{a}+\frac{\displaystyle
\overline{b}-\overline{a}}{\displaystyle
\underline{b}-\underline{a}}(x-\underline{a})\right ]\right
)=F_1\left (\left [x,\overline{a}+\frac{\displaystyle
\overline{b}-\overline{a}}{\displaystyle
\underline{b}-\underline{a}}(x-\underline{a})\right ]\right )$$

\noindent for all $x\in [\underline{a}, \underline{b}]$ then

$$\int_A^B F_2(X)dX=\int_A^B F_1(X)dX.$$
\end{proposition}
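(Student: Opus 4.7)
The plan is to recognize that the hypothesis states exactly that $F_1$ and $F_2$ share the same left and right spectra, and then to invoke the characterization theorem to conclude.

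First, I would observe that as $x$ ranges over $[\underline{a},\underline{b}]$, the intervals $\left[x,\overline{a}+\frac{\overline{b}-\overline{a}}{\underline{b}-\underline{a}}(x-\underline{a})\right]$ trace out precisely the diagonal segment $\overline{AB}$. Thus the hypothesis is equivalent to saying that $F_1$ and $F_2$ agree on $\overline{AB}$. By the very definition of the left and right spectra, applying $\pi_1$ and $\pi_2$ to this pointwise equality yields $(F_1)_l(x)=(F_2)_l(x)$ and $(F_1)_r(x)=(F_2)_r(x)$ for every $x\in [\underline{a},\underline{b}]$.

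Second, I would apply the characterization theorem (Corollary~\ref{coro-charac-gen}) to each $F_i$ to obtain
$$\int_A^B F_i(X)\,dX=\left[\int_{\underline{a}}^{\underline{b}}(F_i)_l(x)\,dx,\;\int_{\underline{a}}^{\underline{b}}(F_i)_r(x)\,dx\right]\frac{d_M(A,B)}{\underline{b}-\underline{a}},\qquad i=1,2.$$
Since the integrands on the right coincide for $i=1$ and $i=2$, the corresponding real Riemann integrals are equal, so the interval integrals are equal, establishing the claim.

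The main subtlety I anticipate is reconciling the hypothesis ``integrable'' with the stronger hypothesis ``continuous'' under which the characterization theorem was proved. For merely integrable but discontinuous $F_i$ the reduction to the diagonal is not automatic: the lower and upper Riemann sums depend on $\bigsqcap F_i(\mathcal{I}_{[P_{i,j},P_{i+1,j+1}]})$ and $\bigsqcup F_i(\mathcal{I}_{[P_{i,j},P_{i+1,j+1}]})$, which sample $F_i$ off the diagonal and could in principle differ between $F_1$ and $F_2$. So the argument above is clean provided we read the proposition under the natural assumption that the $F_i$ are continuous (hence integrable, and then the characterization formula applies); otherwise one would need to strengthen the hypothesis or give a more delicate direct Riemann-sum argument showing that off-diagonal values are irrelevant.
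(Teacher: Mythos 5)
Your proof takes exactly the paper's route: the paper's entire argument is ``Straightforward from Corollary~\ref{coro-charac-gen} and definition of $F_l$ and $F_r$ functions,'' which is precisely your reduction of the hypothesis to equality of the left and right spectra followed by an application of the characterization formula. The subtlety you flag is genuine --- the characterization theorem is proved only for \emph{continuous} $F$, while the proposition assumes only integrability, and the paper's one-line proof silently passes over this mismatch --- so your explicit caveat that the statement should be read with continuous $F_i$ (or else needs a direct Riemann-sum argument showing off-diagonal values are irrelevant) is, if anything, more careful than the original.
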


\proof{ Straightforward from Corollary ~\ref{coro-charac-gen} and
definition of $F_l$ and $F_r$ functions. \hfill \small
$\blacksquare$}

\subsection{Improper Interval Integrals}

 The definite integral is defined to integrate that continuous functions on a limited and closed interval. However, for those functions that has only one point of  discontinuity (actually, for a enumerate number of point of descontinuity) it is also possible to integrate it by means using the concept of improper integral. Such cases arise for the following types of intervals: $[a,+\infty)$, $(-\infty,b]$ or $(-\infty,+\infty)$. 

\begin{definition}
Let $F:\mathcal{I}_{[A,B]}\lra \II(\RR)$ be an interval function. An interval  $L \in  \II(\RR)$ is called to be the \textbf{limit} of $F$ when $X \in \mathcal{I}_{[A,B]}$ tends to the interval $C$  and it is denoted by $L = \displaystyle \lim_{X \to C}F(X)$, if for every given $\varepsilon > 0$ there exists an $\delta > 0$ such that $d_M(F(X),L) < \varepsilon$ whereas $0 < d_M(X,C) < \delta$ and $X \in \mathcal{I}_{[A,B]}$.
\end{definition}

Some researchers have been worked on the definition of limits of an interval function and its properties. Here we will not deal with this subject in depth, but for the most interested reader on this topic we recommend \cite{Wu98}.

Considering the intervals $[\infty]^{x} = [x,+\infty]$ and $[\infty]_{x} = [-\infty,x]$ for some $x \in \RR$. Particularly, when $x=+\infty$ interval $[\infty]^{+\infty} = [+\infty,+\infty]$ will be denoted just by $[+\infty]$. Similarly  $[-\infty]  = [-\infty,-\infty]$. Using that notation, we are able to define limit for infinite. So it follows that
$$
\mathcal{I}_{[A,[\infty]^{b}]} = \{X \in \II(\RR) \ | \ A \leq X \leq [b,+\infty] \ and \ \underline{a} \leq b  \}
$$
and
$$
\mathcal{I}_{[[\infty]_{a},B]} = \{X \in \II(\RR) \ | \ [-\infty,a] \leq X \leq B \ and \ a \leq \overline{b}  \}
$$

\begin{definition} \label{limit_infty}
Let $F:\mathcal{I}_{[A,B]}\lra \II(\RR)$ be an interval function and  $L \in  \II(\RR)$. Thus
\begin{enumerate}
\item If $B = [\infty]^{b}$ for some $b \in \RR$ then  $L = \displaystyle \lim_{X \to  [\infty]^{b}}F(X)$ if given $\varepsilon > 0$ there exists an $0 < K \in \mathcal{I}_{[A,B]}$ such that for every $X > K$ it follows that  $d_M(F(X),L) < \varepsilon$;
\item If  $A = [\infty]_{a}$ for some $a \in \RR$ then  $L = \displaystyle \lim_{X \to  [\infty]_{a}}F(X)$ if given $\varepsilon > 0$ there exists an $0 < K \in \mathcal{I}_{[A,B]}$ such that for every $X < K$ it follows that  $d_M(F(X),L) < \varepsilon$;
\end{enumerate}
\end{definition}

It is clear that in case $B = [+\infty]$ and $A = [-\infty]$ the limits on infinite $\displaystyle \lim_{X \to  [+\infty]}F(X)$ and $\displaystyle \lim_{X \to  [-\infty]}F(X)$ are particular cases of the Definition \ref{limit_infty}.

So now we have the main conditions to define the notion of improper interval integral as follows.

\begin{definition} \label{improper_int}
Let $F:\mathcal{I}_{[A,B]}\lra \II(\RR)$ be an interval function. Thus
\begin{enumerate}
	\item If $B = [\infty]^{b}$ for some $b \in \RR$ then $$\int_A^{[\infty]^{b}} F(X)dX = \lim_{C \to [\infty]^{b}} \int_A^{C} F(X)dX$$
	\item If $A = [\infty]_{a}$ for some $a \in \RR$ then $$\int_{[\infty]_{a}}^B F(X)dX = \lim_{C \to [\infty]_{a}} \int_C^{B} F(X)dX$$
	\item For $A = [-\infty]$ and  $B = [+\infty]$  then $$\int_{[-\infty]}^{[+\infty]} F(X)dX = \lim_{C \to [-\infty]}   \int_C^{[+\infty]} F(X)dX.$$
	\end{enumerate}
\end{definition}

It is worth to note that 
$$\int_{[-\infty]}^{[+\infty]} F(X)dX = \lim_{D \to [+\infty]}   \int_{[-\infty]}^D F(X)dX$$

\section{Interval Probability Density Functions}


Interval probability,  is an extension of classical probability by
consider interval-values to represent the probability of some
event. This area is not new, an in fact there are several approach
for interval probability, among them we have
\cite{Cam00,CB90,Dem67,Rus87,Tes92,Wei00}. The interval
probability of an event is natural in some situations:

\begin{enumerate}
\item  When is working with interval data as for example in
\cite{Cam13,Kov16}.

\item  When is used a Von Mises frequentist approach to
probability which is obtained via observations by projection of a
future stability, once  ``the relative frequency of the observed
attributes would tend to a fixed limit if the observations were
continued indefinitely by sampling from a collective''
\cite{SB08}. Nevertheless, since we can not observed indefinitely,
this projection include an error which could be captured by an
interval.

\item When the probability of an event is not representable
finitely, and therefore is necessary  use an approximation of this
probability \cite{Cam00}.

\item When we has an uncertainty in the probability value, for
example, when only partial information about error distributions
is available and standard statistical approaches cannot be applied
\cite{Kre04}. 

\end{enumerate}

An important notion in standard probability theory is the concept
of  probability density  function  of a continuous random
variable, which is a function that describes the relative
likelihood for the continuous random variable to occur at a given
point in the observation space.

\subsection{Interval Probability Spaces}

 Let $\mathcal{F}$ be a $\sigma$-algebra in the standard sense over a set $\Omega$ (the sample space).
 A (positive) \emph{interval measure} $\mu :
 \mathcal{F} \rightarrow\II(\RR)^+$, where $\II(\RR)^+=\{X \in \II(\RR)\;\mid\; [0,0]\leq X\}$,
  is a function that assigns a (positive) real interval to
each element of $\mathcal{F}$ %
such that the following properties are satisfied:

\begin{description}
\item [(i)] The empty set has measure zero:
$\mu(\emptyset)=[0,0]$

\item [(ii)] Countable additivity: if $\{A_i\;\mid\; i\in
I\}\subseteq \mathcal{F}$ is a set of pairwise disjoints for some
countable index set $I$, then
\begin{equation} \label{eq-part-sum}
\mu\left (\bigcup_{i\in I} A_i \right )=\sum_{i\in I} \mu(A_i)
\end{equation}
\end{description}

An \emph{interval measurable space} is defined as a tuple
$\prt{\Omega, \mathcal{F}, \mu}$, where $\Omega$ is a set,
$\mathcal{F}$ is a $\sigma$-algebra over $\Omega$ and $\mu$ is an
interval measure on $\prt{\Omega,\mathcal{F}}$. The elements of
$\mathcal{F}$ are called \emph{interval measurable sets}.


An \emph{interval probability space} $\prt{\Omega, \mathcal{F},
\mathcal{P}}$ is a tuple consisting of a sample space $\Omega$, a
$\sigma$-algebra $\mathcal{F}$ of subsets of $\Omega$, and a
positive,  interval measure $\mathcal{P}$ on
$\prt{\Omega,\mathcal{F}}$ satisfying $\mathcal{P} (\Omega) =
[1,1]$. In this case, $\Omega$ is known as the \emph{outcome
space} or \emph{sample space}, $\mathcal{F}$ is called the set of
\emph{events} and $\mathcal{P}$ is called an \emph{interval
probability measure} or by simplicity an interval probability.

\subsection{Interval Density Functions}

Let $\Omega$ be a sample space.random An interval random variable
$\mathcal{X}$ is a function $\mathcal{X}:\Omega\rightarrow
\II(\RR)$, i.e. assign a real interval to each sample point in
$\Omega$. When $\mathcal{X}$ is a continuous interval function
which also has a piecewise continuous derivative
$dF_\mathcal{X}(A)/d(A)$ we called $\mathcal{X}$ of continuous
interval random variable.

Let $\mathcal{X}$ be an interval random variable and $A$ and $B$
fixed intervals such that $A\leq B$. Then define the events:

\begin{eqnarray}
(\mathcal{X}=A)=\{\zeta \mid \mathcal{X}(\zeta)= A\} \\
(\mathcal{X}\leq A)=\{\zeta \mid \mathcal{X}(\zeta)\leq A\} \\
(\mathcal{X}\gg A)=\{\zeta \mid \mathcal{X}(\zeta)\gg A\} \\
(A\ll \mathcal{X}\leq B)=\{\zeta \mid A\ll \mathcal{X}(\zeta)= B\} \\
\end{eqnarray}

Given an interval probability space $\prt{\Omega, \mathcal{F},
\mathcal{P}}$, these events have associated the following interval
probabilities

\begin{eqnarray}
P(\mathcal{X}=A)=\mathcal{P}\{\zeta \mid \mathcal{X}(\zeta)= A\} \\
P(\mathcal{X}\leq A)=\mathcal{P}\{\zeta \mid \mathcal{X}(\zeta)\leq A\} \\
P(\mathcal{X}\gg A)=\mathcal{P}\{\zeta \mid \mathcal{X}(\zeta)\gg A\} \\
P(A\ll \mathcal{X}\leq B)=\mathcal{P}\{\zeta \mid A\ll \mathcal{X}(\zeta)= B\}
\end{eqnarray}

 The \emph{interval distribution function} of $\mathcal{X}$ is the function
 $F_\mathcal{X}:\II(\RR)\rightarrow \II(\RR)$ defined by

 \begin{equation}\label{eq-F_X}
 F_\mathcal{X}(X)=P(\mathcal{X}\leq X).
 \end{equation}

The following properties are straightforward from the fact that
$F_\mathcal{X}$ is based on an interval  probability space.

\begin{itemize}
\item [(F1)] $[0,0]\leq F_\mathcal{X}(X) \leq [1,1]$

\item [(F2)] If $X\leq Y$ then $F_\mathcal{X}(X)\leq F_\mathcal{Y}(B)$

\item [(F3)] $\Lim{X\rightarrow [+\infty]} F_\mathcal{X}(X)=[1,1]$

\item [(F4)] $\Lim{X\rightarrow [-\infty]} F_\mathcal{X}(X)=[0,0]$

\item [(F5)] $\Lim{X\rightarrow A^+} F_\mathcal{X}(X)=F_\mathcal{X}(A)$,
where $A^+=\Lim{[0,0]\ll \epsilon\rightarrow [0,0]}A+\epsilon$
\end{itemize}

From Eq. (\ref{eq-F_X}) is possible to obtain the probability of
other events:

\begin{itemize}
\item [(P1)] $P(A\ll \mathcal{X} \leq B)
=F_\mathcal{X}(B)-F_\mathcal{X}(A)$

\item [(P2)] $P(\mathcal{X}\gg A)=[1,1]-F_\mathcal{X}(A)$

\item [(P3)] $P(\mathcal{X}\ll B)=F_\mathcal{X}(B^-)$ where
$B^-=\Lim{[0,0]\ll \epsilon \rightarrow [0,0]} B-\epsilon$.
\end{itemize}

Let $\mathcal{X}$ be a continuos random variable. The function
$f_\mathcal{X}:\II(\RR)\rightarrow \II(\RR)$ defined by

\begin{equation} \label{eq-f_X}
f_\mathcal{X}(X)=\frac{dF_\mathcal{X}(X)}{dX}
\end{equation}
is called the \emph{interval probability density function} of
$\mathcal{X}$.

\begin{proposition}
The interval probability density function of $\mathcal{X}$ satisfy
the following properties:

\begin{enumerate}
\item $f_\mathcal{X}(X)\geq [0,0]$

\item $\displaystyle \int_{[-\infty]}^{[+\infty]}f_\mathcal{X}(X)dX=[1,1]$

\item $F_\mathcal{X}$ is piecewise continuous

\item $P(A\ll \mathcal{X}\leq B)=\displaystyle \int_A^B f_\mathcal{X}(X)dX$
\end{enumerate}

\end{proposition}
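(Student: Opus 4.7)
The plan is to treat the four items in order, reducing each to either a monotonicity fact about $F_\mathcal{X}$, the limiting properties (F3) and (F4), or the characterization theorem (Corollary \ref{coro-charac-gen}) that expresses the new interval integral through two ordinary Riemann integrals of the spectra $F_l,F_r$.

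For item 1, I would argue that nonnegativity of $f_\mathcal{X}$ is the interval analogue of the fact that a monotone distribution function has nonnegative derivative. Concretely, for $\epsilon \gg [0,0]$ property (F2) gives $F_\mathcal{X}(X) \le F_\mathcal{X}(X+\epsilon)$, so the difference quotient $\bigl(F_\mathcal{X}(X+\epsilon)-F_\mathcal{X}(X)\bigr)/\epsilon$ lies in $\II(\RR)^+$; passing to the limit defining $dF_\mathcal{X}/dX$ in Equation (\ref{eq-f_X}) preserves $\ge [0,0]$ because $\II(\RR)^+$ is closed under the metric $d_M$. Item 3 is essentially given for free: the hypothesis that $\mathcal{X}$ is a continuous interval random variable already requires $F_\mathcal{X}$ to have a piecewise continuous derivative, which forces $F_\mathcal{X}$ itself to be piecewise continuous; property (F5) provides the one-sided continuity at every jump point so the assembly is clean.

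For items 2 and 4, the key is a fundamental theorem of calculus for the integral developed in this paper. By Corollary \ref{coro-charac-gen}, for continuous $F$ on $\mathcal{I}_{[A,B]}$,
\begin{equation*}
\int_A^B F(X)\,dX = \left[\int_{\underline{a}}^{\underline{b}} F_l(x)\,dx,\ \int_{\underline{a}}^{\underline{b}} F_r(x)\,dx\right]\frac{d_M(A,B)}{\underline{b}-\underline{a}}.
\end{equation*}
Applied with $F = f_\mathcal{X}$, the spectra $(f_\mathcal{X})_l$ and $(f_\mathcal{X})_r$ are ordinary derivatives along the line $x \mapsto [x,\,\overline{a}+\tfrac{\overline{b}-\overline{a}}{\underline{b}-\underline{a}}(x-\underline{a})]$ of the scalar functions $\underline{F_\mathcal{X}}$ and $\overline{F_\mathcal{X}}$ evaluated along that same line. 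The classical fundamental theorem then lets me rewrite the two scalar integrals as telescoping differences of those scalar functions at $\underline{a}$ and $\underline{b}$, and a short computation shows that this pair of differences, scaled by $d_M(A,B)/(\underline{b}-\underline{a})$, is exactly $F_\mathcal{X}(B)-F_\mathcal{X}(A)$. Combined with property (P1) this yields item 4. Item 2 then follows by taking $A=[-\infty]$, $B=[+\infty]$ in the improper sense of Definition \ref{improper_int} and invoking properties (F3) and (F4), which give $F_\mathcal{X}([+\infty])-F_\mathcal{X}([-\infty]) = [1,1]-[0,0] = [1,1]$.

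The main obstacle I expect is the bookkeeping in the fundamental theorem step: the spectrum $(f_\mathcal{X})_l(x)$ is not simply the derivative of $\underline{F_\mathcal{X}}$ in a single real variable, since the second coordinate of the evaluation point varies with $x$ through the affine map determined by $A$ and $B$. Making this into a genuine one-variable derivative requires the chain rule and uses precisely the slope $(\overline{b}-\overline{a})/(\underline{b}-\underline{a})$ that appears in the correction factor $d_M(A,B)/(\underline{b}-\underline{a})$; verifying that these factors cancel in the right way to recover $F_\mathcal{X}(B)-F_\mathcal{X}(A)$ is the delicate calculation. Once that identification is in place the rest of the proof is cosmetic, with the improper case of item 2 handled by continuity of the integral in its upper limit established in the preceding proposition.
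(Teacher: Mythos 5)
Your overall route coincides with the paper's: item 1 from monotonicity of $F_\mathcal{X}$ (properties (F1)--(F2)) together with Eq.~(\ref{eq-f_X}), item 3 directly from the definition of a continuous interval random variable, and items 2 and 4 from a fundamental-theorem-of-calculus identity $\int_A^B f_\mathcal{X}(X)\,dX = F_\mathcal{X}(B)-F_\mathcal{X}(A)$ combined with (P1), (F3) and (F4) and the improper-integral limits of Definition~\ref{improper_int}. The only structural difference is that you obtain item 2 as a limiting case of item 4, whereas the paper proves the two independently --- a harmless reordering, since the paper itself notes that $\int_{[-\infty]}^{[+\infty]}$ can be computed by iterated limits in either order.

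The one place where you diverge in substance is also the one place where the real difficulty sits, and you name it but do not resolve it. The paper simply writes $\int_C^D f_\mathcal{X}(X)\,dX = \left|F_\mathcal{X}(D)-F_\mathcal{X}(C)\right|_C^D$ with no justification; you instead propose to derive this from Corollary~\ref{coro-charac-gen} by identifying the spectra $(f_\mathcal{X})_l,(f_\mathcal{X})_r$ with one-variable derivatives of $\underline{F_\mathcal{X}},\overline{F_\mathcal{X}}$ along the affine path $x\mapsto[x,\overline{a}+\tfrac{\overline{b}-\overline{a}}{\underline{b}-\underline{a}}(x-\underline{a})]$ and then invoking the classical fundamental theorem. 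This is the right idea, but the ``short computation'' you defer is the entire content of the claim, and it cannot actually be carried out from what the paper provides: the interval derivative $dF_\mathcal{X}(X)/dX$ in Eq.~(\ref{eq-f_X}) is never defined, so there is no basis for asserting that $\pi_1 f_\mathcal{X}([x,\ell(x)])$ equals (up to the factor $d_M(A,B)/(\underline{b}-\underline{a})$, which moreover depends on the endpoints $A,B$ and not only on $x$) the derivative of $x\mapsto\underline{F_\mathcal{X}}([x,\ell(x)])$. Note also that the path $\ell$ in the definition of $F_l,F_r$ changes with the integration limits $A,B$, so the same function $f_\mathcal{X}$ is sampled along different lines for different ranges; your cancellation argument must be checked to be consistent across this dependence. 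To be fair, this gap is inherited from the paper rather than introduced by you --- your write-up at least localizes it --- but as it stands items 2 and 4 rest on an unverified identity.
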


\begin{proof}
\begin{enumerate}
\item By considering Properties (F1) and (F2) and Eq. (\ref{eq-f_X}) it is clear that  $f_\mathcal{X}(X)\geq [0,0]$.
\item Taking into account Definition \ref{improper_int} and Properties (F3) and (F4)  it follows that

$$
\begin{array}{rcl}
\displaystyle \int_{[-\infty]}^{[+\infty]}f_\mathcal{X}(X)dX           & = & \displaystyle  \lim_{C \to [-\infty]} \int_C^{[+\infty]} f_\mathcal{X}(X)dX \\                                                                            & = & \displaystyle  \lim_{C \to [-\infty]} \lim_{D \to [+\infty]} \int_C^{D} f_\mathcal{X}(X)dX \\ 
& = & \displaystyle  \lim_{C \to [-\infty]} \lim_{D \to [+\infty]} \left |F_\mathcal{X}(D)- F_\mathcal{X}(C)\right |_C^D\\ 
& = & \left |\lim_{D \to [+\infty]} F_\mathcal{X}(D)- \lim_{C \to [-\infty]}  F_\mathcal{X}(C) \right|\\ 
& = & [1,1]-[0,0] = [1,1]
\end{array}
$$
\item Straightforward Definition of continuous interval random variable.
\item The distribution function $F_\mathcal{X}$ of a continuous interval
random variable $\mathcal{X}$ can be obtained by
\begin{equation} \label{eq-FX-int-fX}
F_\mathcal{X}(X)=P(\mathcal{X}\leq X)=\int_{[-\infty]}^X
f_\mathcal{X}(\xi)d\xi
\end{equation}
Thus, once when $\mathcal{X}$ is a continuous interval random
variable $P(\mathcal{X}=X)=[0,0]$, then
\begin{equation} \label{eq16}
P(A\ll \mathcal{X}\leq B)=P(A\leq \mathcal{X}\leq B)=P(A\leq
\mathcal{X}\ll B)=P(A\ll \mathcal{X}\ll B)=\int_A^B
f_\mathcal{X}(X)dX
\end{equation}
\end{enumerate}
\end{proof}

Also, from Eq. (\ref{eq16}) it possible to verify that
\begin{equation}
P(A\ll \mathcal{X}\leq B) = \int_A^B
f_\mathcal{X}(X)dX =  F_\mathcal{X}(B)-F_\mathcal{X}(A)
\end{equation}
The mean or expected value of a continuous interval random
variable $\mathcal{X}$, denoted by $\mu_\mathcal{X}$, is defined
by

\begin{equation}
\mu_\mathcal{X}=\int_{[-\infty]}^{[+\infty]}
Xf_\mathcal{X}(X)dX
\end{equation}

The $n$th moment of a continuous interval random variable
$\mathcal{X}$, denoted by $E(\mathcal{X}^n)$, is defined by

\begin{equation}
E(\mathcal{X}^n)=\int_{[-\infty]}^{[+\infty]}
X^nf_\mathcal{X}(X)dX
\end{equation}

Thus, the mean of $\mathcal{X}$ is the first moment of
$\mathcal{X}$.

The variance of a continuous interval random variable
$\mathcal{X}$, denoted by $Var(\mathcal{X})$, is defined by

\begin{equation}
Var(\mathcal{X})=\int_{[-\infty]}^{[+\infty]}
(X-\mu_\mathcal{X})^2f_\mathcal{X}(X)dX
\end{equation}

The standard deviation of a continuous interval random variable
$\mathcal{X}$, denoted by $\sigma_\mathcal{X}$, is defined by

\begin{equation}
\sigma_\mathcal{X}(X)=\sqrt{\int_{[-\infty]}^{[+\infty]}
(X-\mu_\mathcal{X})^2f_\mathcal{X}(X)dX}
\end{equation}

\subsection{Uniform, Exponencial and Gaussian Interval Probability Distribution Functions}

Let $\mathcal{X}$ be a continuous interval random variable and $A$
and $B$ intervals such that $A\ll B$. $\mathcal{X}$ is uniform
over $(A,B)$ if

\begin{equation}
 f_\mathcal{X}(X)=\left\{ \begin{array}{ll}
\frac{[1,1]}{B-A} & \mbox{ if }A\ll X\ll B \\

[0,0] & \mbox{ otherwise} \\
\end{array} \right.
\end{equation}

Notice that, in this case the interval distribution function of
$\mathcal{X}$, by consider the Eq. (\ref{eq-FX-int-fX}), is

$$\begin{array}{ll}
F_\mathcal{X}(X) & =\displaystyle \int_{[-\infty]}^X f_\mathcal{X}(\xi)d\xi \\
& = \left\{\begin{array}{ll}
          \displaystyle \int_A^X f_\mathcal{X}(\xi)d\xi & \mbox{ if }A\ll X\ll B \\

          [0,0] & \mbox{ if $\underline{X} \leq \underline{A}$ or }\overline{X}\leq \overline{a} \\

          [1,1] & \mbox{ if }B\leq X \\
\end{array}
\right . \end{array}$$ where

$$\begin{array}{ll}
 \displaystyle \int_A^X f_\mathcal{X}(\xi)d\xi & =
 \left[ \displaystyle \int_{\underline{a}}^{\underline{x}}
 f_{\mathcal{X}l}(x)dx,\int_{\underline{a}}^{\underline{x}}
 f_{\mathcal{X}r}(x)dx \right]  \frac{d_M(A,X)}{\underline{x}-\underline{a}} \\
 & = \left[ \displaystyle \int_{\underline{a}}^{\underline{x}}
 \pi_1(f_{\mathcal{X}}([x,\overline{a}+\frac{\overline{x}-\overline{a}}{\underline{x}-\underline{a}}(x-\underline{a})]))dx, \right. \\
 & \,\,\,\,\,\,\, \left. \displaystyle \int_{\underline{a}}^{\underline{x}}
 \pi_2(f_{\mathcal{X}}([x,\overline{a}+\frac{\overline{x}-\overline{a}}{\underline{x}-\underline{a}}(x-\underline{a})]))dx \right]
 \frac{d_M(A,X)}{\underline{x}-\underline{a}}\\
 & = \left[\displaystyle  \int_{\underline{a}}^{\underline{x}}
 \pi_1(\frac{[1,1]}{B-A})dx,\int_{\underline{a}}^{\underline{x}}
 \pi_2(\frac{[1,1]}{B-A})dx \right]
 \frac{d_M(A,X)}{\underline{x}-\underline{a}}\\
   & = \left [\displaystyle \int_{\underline{a}}^{\underline{x}}
\frac{1}{\overline{b}-\underline{a}} dx,
\int_{\underline{a}}^{\underline{x}}
\frac{1}{\underline{b}-\overline{a}}dx \right ]
\frac{d_M(A,X)}{\underline{x}-\underline{a}} \\
&= \left [
\frac{x-\underline{a}}{\overline{b}-\underline{a}},
\frac{x-\underline{a}}{\underline{b}-\overline{a}} \right ] \frac{d_M(A,X)}{\underline{x}-\underline{a}} \\
\end{array}$$

The uniform distribution is naturally used when has no previous
knowledge of the begin and end for where a variable could take
values but the probability that it take a value in the range is
the same. For example, random numbers in an interval which are
distributed according to the standard uniform distribution. But if
the limits not are well defined,  then in this case is more
adequate use standard uniform interval distribution.

Let $\mathcal{X}$ be a continuous interval random variable and
$\lambda \in \II(\RR)$ such that $[0,0]\ll \lambda$. $\mathcal{X}$
is exponential with parameter $\lambda$ if

\begin{equation}
 f_\mathcal{X}(X)=\left\{ \begin{array}{ll}
\lambda e^{-\lambda X} & \mbox{ if }[0,0]\ll X \\

[0,0] & \mbox{ otherwise} \\
\end{array} \right.
\end{equation}
where $e^X=\left [e^{\underline{X}},e^{\overline{X}} \right ]$. Thus,
$f_\mathcal{X}(X)=\left [\underline{\lambda} e^{-\overline{\lambda}
\overline{X}},\overline{\lambda} e^{-\underline{\lambda}
\underline{X}} \right ]$ when $X\gg [0,0]$

 Notice that, in this case the interval distribution function
of $\mathcal{X}$, by consider the Eq. (\ref{eq-FX-int-fX}), is

$$\begin{array}{ll}
F_\mathcal{X}(X) & = \displaystyle \int_{[-\infty]}^X f_\mathcal{X}(\xi)d\xi \\
& = \left\{\begin{array}{ll}
          \displaystyle  \int_{[0,0]}^X f_\mathcal{X}(\xi)d\xi & \mbox{ if }[0,0]\ll X\\

          [0,0] & \mbox{ otherwise} \\
\end{array}
\right . \end{array}$$ where

$$\begin{array}{ll}
\displaystyle  \int_{[0,0]}^X f_\mathcal{X}(\xi)d\xi & = \left[
\displaystyle  \int_{0}^{\underline{x}}
 f_{\mathcal{X}l}(x)dx,\int_{0}^{\underline{x}}
 f_{\mathcal{X}r}(x)dx \right]  \frac{d_M([0,0],X)}{\underline{x}} \\
 & = \left[\displaystyle \int_{0}^{\underline{x}}
 \pi_1(f_{\mathcal{X}}([x,\frac{\,\,\overline{x}\,\,}{\underline{x}}x]))dx,\int_{0}^{\underline{x}}
 \pi_2(f_{\mathcal{X}}([x,\frac{\,\,\overline{x}\,\,}{\underline{x}}x]))dx \right]
 \frac{\,\,\overline{x}\,\,}{\underline{x}}\\

 & = \left[\displaystyle \int_{0}^{\underline{x}}
 \underline{\lambda} e^{-\overline{\lambda}
\left(\frac{\,\,\overline{x}\,\,}{\underline{x}}x\right)}
dx,\int_{0}^{\underline{x}} \overline{\lambda}
e^{-\underline{\lambda} x} dx \right]\frac{\,\,\overline{x}\,\,}{\underline{x}} \\
& =  \displaystyle \left[-\frac{\,\,\underline{\lambda}\underline{x}\,\,}{\overline{\lambda}\overline{x}}(e^{-\overline{\lambda}\overline{x}} - 1), -\frac{\,\,\overline{\lambda}\,\,}{\underline{\lambda}}(e^{-\underline{\lambda}\underline{x}} - 1) \right] \frac{\,\,\overline{x}\,\,}{\underline{x}}\\
& =  \displaystyle \left[\frac{\,\,\underline{\lambda}\,\,}{\overline{\lambda}}(1 - e^{-\overline{\lambda}\overline{x}}), \frac{\,\,\overline{\lambda}\overline{x}\,\,}{\underline{\lambda} \underline{x}}(1 - e^{-\underline{\lambda}\underline{x}}) \right] \\
\end{array}$$

Let $\mathcal{X}$ be a continuous interval random variable.
$\mathcal{X}$ is normal or Gaussian if

\begin{equation}
 f_\mathcal{X}(X)=\frac{1}{\sqrt{2\pi
 Var(\mathcal{X})}}e^{-\frac{(X-\mu_\mathcal{X})^2}{2
 Var(\mathcal{X})}}
\end{equation}

Notice that, in this case

\begin{equation}
 f_\mathcal{X}(X)=\left [\frac{1}{\sqrt{2\pi
 \overline{Var(\mathcal{X})}}}e^{-\frac{\,\,(\overline{X}-\underline{\mu_\mathcal{X}})^2\,\,}{2
 \underline{Var(\mathcal{X})}}},\frac{1}{\sqrt{2\pi
 \underline{Var(\mathcal{X})}}}e^{-\frac{\,\,(\underline{X}-\overline{\mu_\mathcal{X}})^2\,\,}{2
 \overline{Var(\mathcal{X})}}}\right ]
\end{equation}

 Notice that, in this case the interval distribution function
of $\mathcal{X}$, by consider the Eq. (\ref{eq-FX-int-fX}), is

$$\begin{array}{ll}
F_\mathcal{X}(X) & = \displaystyle \int_{[-\infty]}^X f_\mathcal{X}(\xi)d\xi \\
& = \left[ \displaystyle  \int_{-\infty}^{\underline{x}}
 f_{\mathcal{X}l}(x)dx,\int_{-\infty}^{\underline{x}}
 f_{\mathcal{X}r}(x)dx \right]  \frac{d_M(A,B)}{\underline{b}-\underline{a}} \\
 & = \left[\displaystyle  \int_{-\infty}^{\underline{x}}
 \pi_1(f_{\mathcal{X}}([x,\overline{a}+\frac{\overline{b}-\overline{a}}{\underline{b}-\underline{a}}(x-\underline{a})]))dx, \right. \\
 & \,\,\,\,\,\, \left. \displaystyle \int_{-\infty}^{\underline{x}}
 \pi_2(f_{\mathcal{X}}([x,\overline{a}+\frac{\overline{b}-\overline{a}}{\underline{b}-\underline{a}}(x-\underline{a})]))dx \right]
 \frac{d_M(A,B)}{\underline{b}-\underline{a}}\\
\end{array}$$

\section{Final Remarks}

As can be seen in an evaluation of the existing literature regarding probability density functions, the interval mathematics was still little explored in 
this scope, although it is an important way of estimating the imprecision of the parameters. The few published works on this topics (as one can see in 
\cite{DB93,PR05}) consider only the interval interpretation for measuring the uncertainty of the variables taking into account a classical view of 
probability density functions instead of an interval one. For this reason, the concept of interval probability density functions based on a new way of 
defining interval integrals presented in this paper creates a wide range of possibilities for probabilistic problems from the interval point of view. The 
results show that the theory is consistent and allows us to generate an interesting way to control the inaccuracies and uncertainties of the variables throughout the mathematical process of the model.

Under point of view of the interval integral theory presented here it is important to highlight that there exists functions which are continuous (according to the Moore topology) but are not inclusion monotonic, for example the interval function $F(X)=m(X)+\frac{1}{2}(X-m(X))$, where $m(X)$ is the middle point of $X$\footnote{This function was used by Moore in \cite{Moo79} as an example of an interval valued function which is not inclusion monotonic, lately in \cite{SBA06} was proved that this function is continuous with respect to the Moore topology.}. However the (real) integrals based on  Riemann sums are defined for all continuous functions, and therefore would be desirable that an interval extension of this notion consider all continuous interval functions (with respect to a suitable notion of continuity for interval functions) and not only those which are inclusion monotonic. So, the deletion of inclusion monotonic restriction, in Moore and Yang integral definition, made in our extension is fundamental in order to provide a robust extension and to consider the integral of this kind of interval functions.

Another restriction in the Moore and Yang approach,  which also occurs  in Caprani, Madsen and Rall integrals in \cite{CMR81}, is by considering only real numbers as integral limits. The Corliss
extension in \cite{Cor87} of Caprani, Madsen and Rall integral in \cite {CMR81} for interval integral limits, consist in a couple of Caprani, Madsen and 
Rall integrals with real integration limits.
Therefore, it could be used in order to extend any other notion of interval integral where the limits are real numbers by simply substituting the Caprani, Madsen and Rall integral for another notion. Nevertheless, these extension and any other of the same line, still computationally more easy to calculate, is not intrinsic and lack of mathematical foundations. Our extension
follows the original spirit of Moore and Yang, i.e. it is a generalization of the usual Riemann sum integrals based on the extension of all elements used in such kind of integral and
therefore is mathematically a well founded extension.

The
 Corollary \ref{coro-charac-gen} can be seen as a meta-algorithm
to compute our extension of the Moore-Yang integrals, in the sense
that we can use any usual method to compute the Riemann integrals
for each integral in the extremes of the interval in equation
(\ref{eq-charac-gen}). Obviously, in this case, as is usual in
interval computing, we need to use directed rounding in each path
of the computing.

Choquet integrals and generalizations of the Choquet integrals \cite{Bus21,Dimuro20,Dimuro20b} are an important family of (pre)aggregation functions used to merge discrete real inputs. Nevertheless,  in many real problems, the inputs arise in the continuum and therefore the standard  Choquet integrals and their generalizations are not adequate. In \cite{Jin18}  was proposed a way of merging Riemann integrable inputs from discrete Choquet integral. As future work we will extend such work by considering interval-valued extensions of  Choquet integrals \cite{Bus13,Gong20,Pater19}.


\bibliographystyle{plain}

\end{document}